\newtheorem{thm}{Theorem}[section]
\newtheorem{cor}[thm]{Corollary}
\newtheorem{lem}[thm]{Lemma}
\newtheorem{exm}[thm]{Example}
\newtheorem{prop}[thm]{Proposition}
\theoremstyle{definition}
\newtheorem{defn}[thm]{Definition}
\theoremstyle{remark}
\newtheorem{rem}[thm]{\bf Remark}
\numberwithin{equation}{section}
\begin{document}
\title[The Gorenstein-projective modules over a monomial algebra]
{The Gorenstein-projective modules over a monomial algebra}
\author[Xiao-Wu Chen, Dawei Shen, Guodong Zhou]{Xiao-Wu Chen, Dawei Shen, Guodong Zhou$^*$}

\subjclass[2010]{16E65, 18G25, 16G10}%
\date{version of \today}

\thanks{E-mail: xwchen@mail.ustc.edu.cn, sdw12345@mail.ustc.edu.cn, gdzhou@math.ecnu.edu.cn}
\keywords{monomial algebra, Gorenstein-projective module, perfect path, quadratic monomial algebra, Nakayama algebra}%
\thanks{$^*$ the corresponding author}

\maketitle

\dedicatory{}%
\commby{}%

\begin{abstract}
 We classify indecomposable  non-projective Gorenstein-projective modules over a  monomial algebra via the notion of  perfect paths. We apply this classification to a quadratic monomial algebra and describe explicitly the stable category of its Gorenstein-projective modules.
\end{abstract}

\section{Introduction}

Let $A$ be a finite-dimensional algebra over a field. We consider the category of finite-dimensional left $A$-modules. The study of Gorenstein-projective modules goes back to \cite{ABr} under the name ``modules of G-dimension zero". The current terminology is taken from \cite{EJ1}. By the fundamental work \cite{Buc}, the stable category of Gorenstein-projective $A$-modules modulo projective modules is closely related to the singularity category of $A$. Indeed, for a Gorenstein algebra, these two categories are triangle equivalent; also see \cite{Hap2}.

We recall that projective modules are Gorenstein-projective. For a selfinjective algebra, all modules are Gorenstein-projective. Hence for the study of Gorenstein-projective modules, we often consider non-selfinjective algebras. However, there are algebras  which admit no nontrivial Gorenstein-projective modules, that is, any Gorenstein-projective module is actually projective; see \cite{Chen2012}.

There are very few classes of non-selfinjective algebras, for which an explicit classification of indecomposable Gorenstein-projective modules is known. In \cite{Ringel}, such  a classification is obtained for Nakayama algebras; compare \cite{CY}. Using the representation theory of string algebras, there is also such a classification for gentle algebras in \cite{Kal}.

We are interested in the Gorenstein-projective modules over a monomial algebra $A$. It turns out that there is an explicit classification of indecomposable Gorenstein-projective $A$-modules, so that we unify the results in \cite{Ringel} and \cite{Kal} to some extent. We mention that we rely on a fundamental result in \cite{Zim}, which implies in particular that an indecomposable Gorenstein-projective $A$-module is isomorphic to a cyclic module generated by a path. Then the classification pins down to the following question: for which path, the corresponding cyclic module is Gorenstein-projective. The main goal of this work is to answer this question.

Let us describe the content of this paper. In Section 2, we recall basic facts on Gorenstein-projective modules. In Section 3, we introduce the notion of a perfect pair of paths for a monomial algebra $A$. The basic properties of a perfect pair are studied. We introduce the notion of a perfect path in $A$; see Definition \ref{defn:perfectpaths}. We prove the main classification result in Section 4, which claims that there is a bijection between the set of all perfect paths and the set of isoclasses of indecomposable non-projective Gorenstein-projective $A$-modules; see Theorem \ref{thm:main}. We describe in Proposition \ref{prop:answer} a nonzero path $p$ for which the cyclic $A$-module $Ap$ is Gorenstein-projective, thus answer the question above. As an application, we show that for a connected truncated  quiver algebra $A$ with the underlying quiver having no sources or sinks, either $A$ is seflinjective, or any Gorenstein-projective $A$-module is projective; see Example \ref{exm:truncated2}.

We specialize Theorem \ref{thm:main} to a quadratic monomial algebra $A$ in Section 5, in which case any perfect path is an arrow. We introduce the notion of a relation quiver for $A$, whose vertices are given by arrows in $A$ and whose arrows are given by relations in $A$; see Definition \ref{defn:re-quiver}. We prove that an arrow in $A$ is perfect if and only if the corresponding vertex in the relation quiver belongs to a connected component which is a basic cycle; see Lemma \ref{lem:re-quiver}. Using the relation quiver, we obtain a characterization result  on when a quadratic monomial algebra is Gorenstein, which includes the well-known result in \cite{GR} that a gentle algebra is Gorenstein; see Proposition \ref{prop:charac}. We describe explicitly the stable category of Gorenstein-projective $A$-modules, which is proved to be a semisimple abelian category; see Theorem \ref{thm:stable}. This theorem generalizes the main result in \cite{Kal}. More generally, we characterize in Proposition \ref{prop:overlap} for a monomial algebra $A$ when the stable category of Gorenstein-projective $A$-module is semisimple.

In Section 6, we study a Nakayama monomial algebra $A$, where the quiver of $A$ is a basic cycle. Following the idea in \cite{Ringel}, we describe explicitly in Proposition \ref{prop:Nak}  all the perfect paths for $A$. As a consequence, we recover a key characterization result for indecomposable Gorenstein-projective modules over a Nakayama algebra in \cite{Ringel}.

The standard reference on the representation theory of finite-dimensional algebras is \cite{ARS}.

\section{The Gorenstein-projective modules}

In this section, we recall some basic facts about Gorenstein-projective modules over a finite-dimensional algebra.

Let $A$ be a finite-dimensional algebra over a field $k$. We  consider the category  $A\mbox{-mod}$ of finite-dimensional left $A$-modules, and  denote by $A\mbox{-proj}$ the full subcategory of  $A\mbox{-mod}$ consisting of projective $A$-modules. We will identify right $A$-modules as left $A^{\rm op}$-modules, where $A^{\rm op}$ is the opposite algebra of $A$.

 For two left $A$-modules $X$ and $Y$, we denote by ${\rm Hom}_A(X, Y)$ the space   of module homomorphisms from $X$ to $Y$, and by $P(X, Y)$ the subspace formed by those homomorphisms factoring through a projective module. Write ${\rm \underline{Hom}}_A(X, Y)={\rm Hom}_A(X, Y)/P(X, Y)$ to be the quotient space, which is the Hom-space in the stable category $A\mbox{-\underline{mod}}$. Indeed, the stable category $A\mbox{-\underline{mod}}$ is defined as follows: the objects are (finite-dimensional) left $A$-modules, and the Hom-space for two objects $X$ and $Y$ are defined to be ${\rm \underline{Hom}}_A(X, Y)$, where the composition of morphisms is induced by the composition of module homomorphisms.

 Let $M$ be a left $A$-module. Then $M^*={\rm Hom}_A(M, A)$ is a right $A$-module. Recall that an $A$-module $M$ is \emph{Gorenstein-projective} provided that there is an acyclic complex $P^\bullet$ of projective $A$-modules such that the Hom-complex $(P^\bullet)^*={\rm Hom}_A(P^\bullet, A)$ is still acyclic and that $M$ is isomorphic to a certain cocycle $Z^i(P^\bullet)$ of $P^\bullet$. We denote by $A\mbox{-Gproj}$ the full subcategory of $A\mbox{-mod}$ formed by Gorenstein-projective $A$-modules. We observe that $A\mbox{-proj}\subseteq A\mbox{-Gproj}$.  We recall that the full subcategory $A\mbox{-Gproj}\subseteq A\mbox{-mod}$ is closed under direct summands, kernels of epimorphisms and extensions; compare \cite[(3.11)]{ABr}.

The  Gorenstein-projective modules are also called Cohen-Macaulay modules in the literature. Following \cite{Bel2}, an algebra $A$ is \emph{CM-finite} provided that up to isomorphism there are only finitely many indecomposable Gorenstein-projective $A$-modules. As an extreme case, we say that the algebra $A$  is \emph{CM-free} \cite{Chen2012} provided that $A\mbox{-proj}=A\mbox{-Gproj}$.

Let $M$ be a left $A$-module. Recall that its \emph{syzygy} $\Omega(M)=\Omega^1(M)$ is defined to be the kernel of
its projective cover $P\rightarrow M$. Then we have the $d$-th syzygy $\Omega^d(M)$ of $M$ defined inductively by $\Omega^d(M)=\Omega(\Omega^{d-1}M)$ for $d\geq 2$. Set $\Omega^0(M)=M$. We observe that for a Gorenstein-projective module $M$, all its syzygies  $\Omega^d(M)$ are Gorenstein-projective.

Since $A\mbox{-Gproj}\subseteq A\mbox{-mod}$  is closed under
extensions, it becomes naturally an exact category in the sense of
Quillen \cite{Qui73}. Moreover, it is a \emph{Frobenius category},
that is, it has enough (relatively) projective and enough
(relatively) injective objects, and the class of projective objects
coincides with the class of injective objects. In fact, the class of
the projective-injective objects in $A\mbox{-Gproj}$ equals
$A\mbox{-proj}$. In particular, we have ${\rm Ext}_A^i(M, A)=0$ for any Gorenstein-projective $A$-module $M$ and each $i\geq $1. For details, we refer to \cite[Proposition 3.8(i)]{Bel2}.

We denote by $A\mbox{-\underline{Gproj}}$ the full subcategory of $A\mbox{-\underline{mod}}$ consisting of Gorenstein-projective $A$-modules. Then the assignment
$M\mapsto \Omega(M)$ induces an auto-equivalence $\Omega\colon
A\mbox{-\underline{Gproj}}\rightarrow A\mbox{-\underline{Gproj}}$. Moreover, the stable category $A\mbox{-\underline{Gproj}}$ becomes a triangulated category such that  the translation functor is given by a quasi-inverse of $\Omega$, and that the triangles are induced by short exact sequences in $A\mbox{-Gproj}$. These are consequences of a general result in  \cite[Chapter I.2]{Hap}.

We observe that the stable category $A\mbox{-\underline{Gproj}}$ is Krull-Schmidt. We denote by ${\rm ind}\; A\mbox{-\underline{Gproj}}$ the set of isoclasses of indecomposable objects inside. There is a natural identification between ${\rm ind}\; A\mbox{-\underline{Gproj}}$ and the set of  isoclasses of indecomposable non-projective Gorenstein-projective $A$-modules.

The following facts are well known.

\begin{lem}\label{lem:general}
Let $M$ be a Gorenstein-projective $A$-module which is
indecomposable and  non-projective. Then  the following three statements hold.
\begin{enumerate}
\item The syzygy  $\Omega(M)$ is also an indecomposable non-projective Gorenstein-projective $A$-module.
\item  There exists an indecomposable $A$-module $N$ which is non-projective and Gorenstein-projective such that $M\simeq \Omega(N)$.
\item If the algebra $A$ is CM-finite with precisely $d$ indecomposable non-projective Gorenstein-projective modules up to isomorphism, then we have an isomorphism $M\simeq \Omega^{d!}(M)$. Here, $d!$ denotes  the factorial of $d$.  \hfill $\square$
\end{enumerate}
\end{lem}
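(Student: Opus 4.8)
The plan is to use the triangulated structure on $A\mbox{-\underline{Gproj}}$ together with the fact, recorded just above, that $\Omega$ is an auto-equivalence there. For part (1), I would argue that $\Omega$, being an auto-equivalence of a Krull-Schmidt category, preserves indecomposability and sends nonzero objects to nonzero objects; since in $A\mbox{-\underline{Gproj}}$ the zero objects are exactly the projective modules, $\Omega(M)$ is Gorenstein-projective, indecomposable and non-projective. One should also note $\Omega(M)$ is genuinely a module (a syzygy), and that being a syzygy of a Gorenstein-projective module it is Gorenstein-projective, as already observed in the text. For part (2), since $\Omega$ is an auto-equivalence it is in particular essentially surjective: there is an object $N$ in $A\mbox{-\underline{Gproj}}$ with $\Omega(N)\simeq M$ in the stable category. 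Then $N$ is a non-projective Gorenstein-projective module (non-projective because $M$ is nonzero in the stable category), and it is indecomposable by the same Krull-Schmidt argument applied to a quasi-inverse of $\Omega$. A small point to address: the stable isomorphism $\Omega(N)\simeq M$ should be upgraded to or replaced by an honest module statement; but since both sides are Gorenstein-projective and have no nonzero projective summands, a stable isomorphism between them gives an isomorphism of modules up to projective summands, and after deleting projective summands from $N$ one may assume $\Omega(N)\simeq M$ in $A\mbox{-mod}$.

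For part (3), the idea is an orbit-counting (pigeonhole) argument combined with the fact that $\Omega$ acts as a permutation on the finite set $\mathrm{ind}\,A\mbox{-\underline{Gproj}}$. By parts (1) and (2), $M\mapsto \Omega(M)$ restricts to a bijection of the set $S$ of isoclasses of indecomposable non-projective Gorenstein-projective modules onto itself; here $|S|=d<\infty$. Any permutation of a $d$-element set has order dividing $d!$ (its order is the lcm of its cycle lengths, each of which is at most $d$, hence divides $d!$). Therefore $\Omega^{d!}$ is the identity permutation on $S$, so $\Omega^{d!}(M)\simeq M$ in $A\mbox{-\underline{Gproj}}$, i.e.\ up to projective summands. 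Since $M$ is indecomposable non-projective and $\Omega^{d!}(M)$ is again a module with no projective summands (iterating part (1)), the stable isomorphism is an honest isomorphism $M\simeq \Omega^{d!}(M)$ of $A$-modules.

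The main obstacle, such as it is, is bookkeeping rather than conceptual: keeping straight the distinction between isomorphism in $A\mbox{-mod}$ and isomorphism in the stable category $A\mbox{-\underline{Gproj}}$, and checking that each syzygy $\Omega^d(M)$ can be taken to have no projective direct summand so that stable isomorphisms descend to genuine ones. For the order-of-a-permutation step, I would simply invoke that the order of any element of the symmetric group $S_d$ divides $d!$ (equivalently divides $\mathrm{lcm}(1,2,\dots,d)$, which divides $d!$), so no delicate estimate is needed. Everything else is a direct application of the auto-equivalence property of $\Omega$ and the Krull-Schmidt property of $A\mbox{-\underline{Gproj}}$, both already established in the preceding discussion.
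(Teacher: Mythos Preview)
Your proposal is correct and follows essentially the same approach as the paper: the paper's proof simply observes that the auto-equivalence $\Omega$ on $A\mbox{-}\underline{\mathrm{Gproj}}$ induces a permutation on the set of isoclasses of indecomposable non-projective Gorenstein-projective modules, from which all three statements follow, and you have spelled out precisely this argument with the bookkeeping (Krull--Schmidt, stable versus genuine isomorphisms, order of a permutation dividing $d!$) made explicit. One small remark: your phrase ``after deleting projective summands from $N$'' is slightly imprecise---what you really use is that $\Omega(N)$ itself has no projective summand when $N$ is indecomposable non-projective Gorenstein-projective, which follows from part (1) applied to $N$ (or from the vanishing of $\mathrm{Ext}^1_A(N,A)$ combined with minimality of the projective cover).
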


\begin{proof}
We observe that the auto-equivalence $\Omega\colon
A\mbox{-\underline{Gproj}}\rightarrow A\mbox{-\underline{Gproj}}$ induces a permutation on the set of isoclasses of indecomposable non-projective Gorenstein-projective $A$-modules.
Then all the statements follow immediately. For a detailed proof of (1), we refer to \cite[Lemma 2.2]{Chen2012}.
\end{proof}

Let $d\geq 0$. We recall from \cite{Buc,Hap2} that an algebra $A$ is \emph{$d$-Gorenstein} provided that the injective dimension of the regular module $A$ on both sides is at most $d$. By a \emph{Gorenstein algebra} we mean a $d$-Gorenstein algebra for some $d\geq 0$. We observe that $0$-Gorenstein algebras coincide with selfinjective algebras.

The following result is due to \cite{Hoshi}.

\begin{lem}\label{lem:Gorenstein}
Let $A$ be a finite-dimensional algebra and  $d\geq 0$. Then the  algebra $A$ is $d$-Gorenstein if and only if for each $A$-module $M$, the module $\Omega^d(M)$ is Gorenstein-projective. \hfill $\square$
\end{lem}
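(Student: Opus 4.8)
The plan is to prove both implications by relating the vanishing of $\mathrm{Ext}$-groups against $A$ to the existence of certain acyclic complexes. First I would recall the standard characterization: an $A$-module $N$ is Gorenstein-projective if and only if $\mathrm{Ext}_A^i(N,A)=0$ for all $i\geq 1$ \emph{and} there is a ``co-resolution'' half, i.e.\ $N$ embeds into a projective with Gorenstein-projective cokernel, iterated; equivalently, $N$ admits a complete resolution. For the implication that $d$-Gorenstein forces $\Omega^d(M)$ to be Gorenstein-projective, I would take a projective resolution $P^\bullet\to M$, so that $\Omega^d(M)=Z^{-d}(P^\bullet)$ sits as a $d$-th syzygy. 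Dualizing, the injective dimension of ${}_AA$ being at most $d$ (together with the same bound for $A_A$, used to control $\mathrm{Ext}$ of right modules) gives $\mathrm{Ext}_A^i(\Omega^d(M),A)=0$ for all $i\geq 1$, by a dimension-shift argument: $\mathrm{Ext}_A^i(\Omega^d M,A)\cong\mathrm{Ext}_A^{i+d}(M,A)$, and the latter vanishes once $i+d$ exceeds the injective dimension of $A$. The other half of the complete resolution is built by splicing the projective resolution of $M$ with a projective \emph{co}resolution: here one uses that $\Omega^d$ of \emph{any} module has a right $\mathrm{Hom}_A(-,A)$-acyclic resolution, which again follows from $\mathrm{id}\, A_A\leq d$ applied to the transpose. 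Assembling these yields the required totally acyclic complex of projectives with $\Omega^d(M)$ as a cocycle.

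For the converse, I would argue contrapositively or directly: suppose $\Omega^d(M)$ is Gorenstein-projective for every $M$. Applying the Frobenius/Ext-vanishing fact recalled in the excerpt (that $\mathrm{Ext}_A^i(N,A)=0$ for $i\geq 1$ when $N$ is Gorenstein-projective), and using the dimension shift $\mathrm{Ext}_A^{i}(M,A)\cong\mathrm{Ext}_A^{i-d}(\Omega^d M,A)=0$ for all $i>d$, we get that every $A$-module $M$ has $\mathrm{Ext}_A^{i}(M,A)=0$ for $i>d$; specializing to $M=A/\mathrm{rad}\,A$ or running over all simples shows the injective dimension of ${}_AA$ is at most $d$. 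To get the bound on the other side, apply the same hypothesis over $A^{\mathrm{op}}$—but here one must be slightly careful, since the statement as given only assumes the ``$\Omega^d$ Gorenstein-projective'' condition for left modules. The resolution is that for Gorenstein-projective $N$, the dual $N^*$ behaves well: the complete resolution of $N$ dualizes to one of $N^*$, so $\mathrm{Ext}_{A^{\mathrm{op}}}^i(N^*,A)=0$ too, and since the syzygies of left modules exhaust enough right modules via duality one recovers $\mathrm{id}\, A_A\leq d$.

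The main obstacle I anticipate is exactly this left–right symmetry issue: deducing the injective dimension bound on the \emph{right} regular module from a hypothesis phrased only in terms of left modules. The clean way around it is to invoke the known equivalence (e.g.\ from \cite{Buc} or the Auslander–Reiten–Bridger formalism in \cite{ABr}) that $\mathrm{id}\,{}_AA\leq d$ alone already implies $\mathrm{id}\,A_A\leq d$ for an Artin algebra, so that ``$d$-Gorenstein'' can be detected one-sidedly; then both directions become purely a dimension-shifting bookkeeping over left modules. I would also need to handle the degenerate case $d=0$ separately (selfinjective algebras), where $\Omega^0(M)=M$ being Gorenstein-projective for all $M$ is tautologically the statement that $A$ is selfinjective. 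Beyond this, the argument is routine homological algebra, and I expect the write-up to consist mostly of making the dimension shifts and the splicing of projective resolutions with coresolutions precise.
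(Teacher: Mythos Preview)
The paper does not give its own proof of this lemma: it is stated with a closing $\square$ and attributed to Hoshino \cite{Hoshi}. So there is no in-paper argument to compare against, only the cited source.

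Your forward direction is essentially fine. Once $A$ is $d$-Gorenstein, the dimension shift $\mathrm{Ext}_A^i(\Omega^d M,A)\cong \mathrm{Ext}_A^{i+d}(M,A)=0$ is correct, and the missing ``right half'' of the complete resolution is supplied by the standard fact that over a Gorenstein algebra a module $N$ with $\mathrm{Ext}_A^{\geq 1}(N,A)=0$ is already Gorenstein-projective (one shows $N\cong N^{**}$ and that $N^*$ satisfies the analogous vanishing on the right, using $\mathrm{id}\,A_A\leq d$, then iterates). Your sketch gestures at this and could be made precise without difficulty.

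The converse, however, has a genuine gap. Your ``clean way around'' the left--right issue is to invoke that $\mathrm{id}\,{}_AA\leq d$ alone implies $\mathrm{id}\,A_A\leq d$ for an Artin algebra. This is \emph{not} a known theorem: it is exactly the Gorenstein Symmetry Conjecture, which is open in general (Zaks only tells you the two injective dimensions agree \emph{when both are finite}). So you cannot cite it from \cite{Buc} or \cite{ABr}. Your earlier attempt, that ``syzygies of left modules exhaust enough right modules via duality,'' is the right instinct but is too vague as stated: the $k$-linear duality $D$ trades projectives for injectives and does not produce right syzygies, and $(-)^*$ only gives you information about modules of the form $G^*$ with $G$ Gorenstein-projective, not about an arbitrary right module.

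What actually works, and what Hoshino does, is to pass through the Auslander--Bridger transpose $\mathrm{Tr}$. The point is that an arbitrary right module is, stably, of the form $\mathrm{Tr}\,M$ for a left module $M$, and the condition ``$\Omega^d M$ is Gorenstein-projective (in particular reflexive/torsionfree) for every left $M$'' translates, via the Auslander--Bridger formula relating torsionfreeness of $\Omega^d M$ to the vanishing of $\mathrm{Ext}^i_{A^{\mathrm{op}}}(\mathrm{Tr}\,\Omega^d M,A)$, into the required $\mathrm{Ext}$-vanishing for right modules. This yields $\mathrm{id}\,A_A\leq d$ directly from the one-sided hypothesis, without appealing to any symmetry conjecture. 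If you want to write out a proof rather than cite \cite{Hoshi}, this is the mechanism you need to make explicit.
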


For an element $a$ in $A$, we consider the left ideal $Aa$ and the right ideal $aA$ generated by $a$.  We have the following well-defined monomorphism of right $A$-modules
\begin{align}\label{equ:0}
\theta_a\colon aA\longrightarrow (Aa)^*={\rm Hom}_A(Aa, A),
\end{align}
which is defined by $\theta_a(ax)(y)=yx$ for $ax\in aA$  and $y\in Aa$. Dually, we have the following monomorphism of left $A$-modules
\begin{align}\label{equ:00}
\theta'_a\colon Aa\longrightarrow (aA)^*={\rm Hom}_{A^{\rm op}}(aA, A),
\end{align}
which is defined by $\theta'_a(xa)(y)=xy$ for $xa\in Aa$ and $y\in aA$. If $e$ is an idempotent in $A$,  both $\theta_{e}$ and $\theta'_{e}$ are isomorphisms; see \cite[Proposition I.4.9]{ARS}.

The following fact will be used later.

\begin{lem}\label{lem:Hom}
Let $a$ be an element in $A$ satisfying that $\theta_a$ is an isomorphism, and let $b\in A$. Then the isomorphism $\theta_a$ induces a $k$-linear isomorphism
\begin{align}
\frac{aA\cap Ab}{aAb}\stackrel{\sim}\longrightarrow \underline{\rm Hom}_A(Aa, Ab).
\end{align}
\end{lem}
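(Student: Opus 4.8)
The plan is to construct the isomorphism explicitly and then check it is well-defined and bijective. First I would recall that $\theta_a\colon aA \to (Aa)^*$ is an isomorphism of right $A$-modules, so applying $\mathrm{Hom}_A(-, Ab)$-type reasoning, or rather using the adjunction/evaluation, I want to identify $\mathrm{Hom}_A(Aa, Ab)$ with something concrete. The natural map goes as follows: given $c \in aA \cap Ab$, write $c = ax$ for some $x \in A$ (using $c \in aA$); then define $f_c \colon Aa \to Ab$ by $f_c(y) = yx$ for $y \in Aa$. Note $f_c(a) = ax = c \in Ab$, and since $Aa$ is generated by $a$ as a left module and $Ab$ is a left submodule, $f_c(Aa) = Aax \subseteq Ab$ because $ax = c \in Ab$ forces $Aax \subseteq AAb = Ab$; so $f_c$ indeed lands in $Ab$. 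This assignment $c \mapsto f_c$ is the candidate, and I would first check it is $k$-linear and independent of the choice of $x$: if $ax = ax'$ then $\theta_a(ax) = \theta_a(ax')$, i.e. $yx = yx'$ for all $y \in Aa$, so $f_c$ is well-defined on the nose (before passing to quotients).

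Next I would show this map $aA \cap Ab \to \mathrm{Hom}_A(Aa, Ab)$ is surjective. Given any $g \in \mathrm{Hom}_A(Aa, Ab)$, set $c = g(a) \in Ab$. Also $c = g(a) = g(a \cdot 1)$; I need $c \in aA$. This is where the isomorphism $\theta_a$ enters: composing $g$ with the inclusion $Ab \hookrightarrow A$ gives an element of $\mathrm{Hom}_A(Aa, A) = (Aa)^*$, which by surjectivity of $\theta_a$ equals $\theta_a(ax)$ for some $ax \in aA$; then $g(y) = \theta_a(ax)(y) = yx$ for all $y \in Aa$, so in particular $c = g(a) = ax \in aA$, and $g = f_c$. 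Hence the map is surjective onto $\mathrm{Hom}_A(Aa, Ab)$.

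Then I would compute the kernel of the composite $aA \cap Ab \to \mathrm{Hom}_A(Aa, Ab) \to \underline{\mathrm{Hom}}_A(Aa, Ab)$, i.e. determine which $c = ax$ give $f_c \in P(Aa, Ab)$, the homomorphisms factoring through a projective. The claim is that this subspace is exactly $aAb$. One inclusion: if $c \in aAb$, write $c = a z b$ with $z \in A$; then $f_c(y) = yzb$ factors as $Aa \xrightarrow{y \mapsto yz} A \xrightarrow{w \mapsto wb} Ab$ wait — more carefully, $f_c$ factors through the free module $A$ via $y \mapsto yz \mapsto yzb$, and $A$ is projective, so $f_c \in P(Aa, Ab)$. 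Conversely, suppose $f_c$ factors through a projective, hence through some free module $A^n$; projecting, it factors through $A$, so there are maps $Aa \xrightarrow{u} A \xrightarrow{v} Ab$ with $f_c = v \circ u$. A homomorphism $Aa \to A$ is, via $\theta_a$, right multiplication by some element $x' \in A$ with $ax' \in aA$, and $v$ is left-exact... I would argue $v$ is given by right multiplication by an element of $A$ landing in $Ab$ — actually $v\colon A \to Ab$ sends $1 \mapsto$ some element of $Ab$, say $wb$, so $v(t) = twb$; then $c = f_c(a) = v(u(a)) = v(ax') = ax'wb \in aAb$. So the kernel is precisely $aAb$, and the induced map $\frac{aA \cap Ab}{aAb} \to \underline{\mathrm{Hom}}_A(Aa, Ab)$ is a well-defined isomorphism.

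The main obstacle I anticipate is the careful bookkeeping in the kernel computation: showing that a homomorphism $Aa \to Ab$ factoring through a projective must in fact factor through the regular module $A$ in a way compatible with the description via $\theta_a$, and pinning down that the relevant element lies in $aAb$ rather than just in $aA \cap Ab$. The surjectivity step relies essentially on the hypothesis that $\theta_a$ is an isomorphism (injectivity of $\theta_a$ alone would not suffice to realize an arbitrary $g$), so I would make sure that is invoked cleanly. Everything else is routine verification of $k$-linearity and well-definedness modulo the chosen representatives.
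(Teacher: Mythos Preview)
Your approach is essentially the paper's: identify $\mathrm{Hom}_A(Aa,Ab)$ with $aA\cap Ab$ via $\theta_a$, then show that under this identification $P(Aa,Ab)$ corresponds to $aAb$. The one step that does not work as written is ``projecting, it factors through $A$'': a factorization $Aa\to A^n\to Ab$ does not collapse to a factorization through a single copy of $A$ by projecting onto a coordinate. The clean fix, which is exactly what the paper does, is to note that since $\pi\colon A\to Ab$, $1\mapsto b$, is an epimorphism from a projective, any map $Aa\to Ab$ that factors through some projective $P$ automatically factors through $\pi$ (lift $P\to Ab$ along $\pi$ by projectivity of $P$); hence $P(Aa,Ab)$ equals the image of $\mathrm{Hom}_A(Aa,\pi)$, which under the identification $\theta_a\colon aA\xrightarrow{\sim}(Aa)^*$ is the image of $aA\to aA\cap Ab$, $ax\mapsto axb$, namely $aAb$. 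Alternatively your $A^n$ route can be salvaged by summing over coordinates rather than projecting: write $f_c=\sum_i v_i\circ u_i$ with each $u_i\colon Aa\to A$ and $v_i\colon A\to Ab$, and conclude $c=f_c(a)=\sum_i u_i(a)v_i(1)\in aAb$.
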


\begin{proof}
For a left ideal $K\subseteq A$, we identify ${\rm Hom}_A(Aa, K)$ with the subspace of ${\rm Hom}_A(Aa, A)$, that consists of homomorphisms with images in $K$. Therefore, the isomorphism $\theta_a$ induces an isomorphism $aA\cap K \stackrel{\sim}\rightarrow {\rm Hom}_A(Aa, K)$. In particular, we have an isomorphism $aA\cap Ab\stackrel{\sim}\rightarrow {\rm Hom}_A(Aa, Ab)$.

Consider the surjective homomorphism $\pi \colon A\rightarrow Ab$ satisfying $\pi(1)=b$. Recall that $P(Aa, Ab)$ denotes the subspace of ${\rm Hom}_A(Aa, Ab)$ consisting of homomorphisms factoring through a projective module. Then $P(Aa, Ab)$ equals the image of ${\rm Hom}_A(Aa, \pi)$. We identify ${\rm Hom}_A(Aa, A)$ with $aA$, and  ${\rm Hom}_A(Aa, Ab)$ with $aA\cap Ab$. We compute that the image of ${\rm Hom}_A(Aa, \pi)$ is identified with $aAb$. Then the required isomorphism follows immediately.
\end{proof}

\section{Monomial algebras and perfect pairs}

In this section, we recall some basic notions and results about a monomial algebra. We introduce the notions of a perfect pair and of a perfect path. Some basic properties of a perfect pair are studied.

Let $Q$  be a finite quiver. We recall that a finite quiver $Q=(Q_0, Q_1; s, t)$ consists of a finite set $Q_0$ of vertices, a finite set $Q_1$ of arrows and two maps $s,t\colon Q_1\rightarrow Q_0$ which assign to each arrow $\alpha$ its starting vertex $s(\alpha)$ and its terminating vertex $t(\alpha)$.

A path $p$ of length $n$ in $Q$ is a sequence $p=\alpha_n\cdots \alpha_2\alpha_1$ of arrows such that $s(\alpha_i)=t(\alpha_{i-1})$ for $2\leq i\leq n$; moreover, we define its starting vertex $s(p)=s(\alpha_1)$ and its terminating vertex $t(p)=t(\alpha_l)$.  We observe that a path of length one is just an arrow. For each vertex $i$, we associate a trivial path $e_i$ of length zero, and set $s(e_i)=i=t(e_i)$. A path of length at least one is said to be \emph{nontrivial}. A nontrivial path is called an \emph{oriented cycle} if its starting vertex equals its terminating vertex.

For two paths $p$ and $q$ with $s(p)=t(q)$, we write $pq$ for their concatenation. By convention, we have $p=pe_{s(p)}=e_{t(p)}p$. For two paths $p$ and $q$ in $Q$, we say that $q$ is a \emph{sub-path} of $p$ provided that $p=p''qp'$ for some paths $p''$ and $p'$. The sub-path $q$ is \emph{proper} if further $p\neq q$.

Let  $S$ be a set of paths in $Q$. A path $p$ in $S$ is \emph{left-minimal in $S$} provided that there is no path $q$ such that $q\in S$ and $p=qp'$ for some nontrivial path $p'$. Dually, one defines a \emph{right-minimal path in $S$}. A path $p$ in $S$ is  \emph{minimal in $S$} provided that there is no proper sub-path $q$ of $p$ inside $S$.

Let $k$ be a field. The path algebra $kQ$ of a finite quiver $Q$ is defined as follows. As a $k$-vector space, it has a basis given by all the paths in $Q$. For two paths $p$ and $q$, their multiplication is given by the concatenation $pq$ if $s(p)=t(q)$, and it is zero, otherwise.  The unit of $kQ$ equals  $\sum_{i\in Q_0}e_i$. Denote by $J$ the two-sided ideal of $kQ$ generated by arrows. Then $J^d$ is spanned by all the paths of length at least $d$ for each $d\geq 2$. A two-sided ideal $I$ of $kQ$ is \emph{admissible} if $J^d\subseteq I\subseteq J^2$ for some $d\geq 2$. In this case, the quotient algebra $A=kQ/I$ is finite-dimensional.

We recall that an admissible ideal $I$ of $kQ$ is \emph{monomial} provided that it is generated by some paths of length at least two. In this case, the quotient algebra $A=kQ/I$ is called a \emph{monomial algebra}.

Let $A=kQ/I$ be a monomial algebra as above. We denote by $\mathbf{F}$ the set formed by all the minimal paths among the paths in $I$; it is a finite set. Indeed, the set $\mathbf{F}$ generates $I$ as a two-sided ideal. Moreover, any set consisting of paths that generates $I$ necessarily contains $\mathbf{F}$.

We make the following convention as in \cite{Zim}. A path $p$ is said to be a  \emph{nonzero path  in $A$} provided that $p$ does not belong to $I$, or equivalently, $p$ does not contain a sub-path in $\mathbf{F}$. For a nonzero path $p$, we abuse $p$ with its canonical image $p+I$ in $A$. On the other hand, for a path $p$ in $I$, we write $p=0$ in $A$.  We observe that the set of nonzero paths forms a $k$-basis of $A$.

For a nonzero path $p$,  we consider the left ideal $Ap$ and the right ideal $pA$. We observe that $Ap$ has a basis given by all nonzero paths $q$ such that $q=q'p$ for some path $q'$. Similarly, $pA$ has a basis given by all nonzero paths $\gamma$ such that $\gamma=p\gamma'$ for some path $\gamma'$. If $p=e_i$ is trivial, then $Ae_i$ and $e_iA$ are indecomposable projective left and right $A$-modules, respectively.

For a nonzero nontrivial path $p$, we define $L(p)$ to be the set of right-minimal paths in the set  $\{\mbox{nonzero paths } q\; |\; s(q)=t(p) \mbox{ and } qp=0\}$ and $R(p)$ the set of left-minimal paths in the set $\{\mbox{nonzero paths } q\; |\; t(q)=s(p) \mbox{ and } pq=0\}$.

The following well-known fact is straightforward; compare the first paragraph in \cite[p.162]{Zim}.

\begin{lem}\label{lem:trivial}
Let $p$ be a nonzero nontrivial path in $A$. Then we have the following exact sequence of left $A$-modules
\begin{align}\label{equ:1}
0\longrightarrow \bigoplus_{q\in L(p)} Aq\stackrel{\rm inc}\longrightarrow Ae_{t(p)}\stackrel{\pi_p}\longrightarrow Ap\longrightarrow 0,
\end{align}
where ``${\rm inc}$" is the inclusion map and $\pi_p$ is the projective cover of $Ap$ with $\pi_p(e_{t(p)})=p$. Similarly, we have the following exact sequence of right $A$-modules
\begin{align}\label{equ:2}
0\longrightarrow \bigoplus_{q\in R(p)} qA\stackrel{\rm inc}\longrightarrow e_{s(p)}A \stackrel{\pi'_p}\longrightarrow pA\longrightarrow 0,
\end{align}
where $\pi'_p$ is the projective cover of $pA$ with $\pi'_p(e_{s(p)})=p$. \hfill $\square$
\end{lem}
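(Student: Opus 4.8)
The plan is to construct \eqref{equ:1} by hand using the $k$-basis of $A$ given by the nonzero paths, and then to obtain \eqref{equ:2} by the symmetric argument (equivalently, by applying \eqref{equ:1} to the opposite algebra $A^{\rm op}$, under which left ideals become right ideals and the role of $L(\,\cdot\,)$ is played by $R(\,\cdot\,)$).

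First I would set up $\pi_p$. Since $p=e_{t(p)}p$, right multiplication by $p$ restricts to a surjective homomorphism of left $A$-modules $\pi_p\colon Ae_{t(p)}\to Ap$ with $\pi_p(e_{t(p)})=p$. As recalled above, $Ae_{t(p)}$ has $k$-basis the set of nonzero paths $r$ with $s(r)=t(p)$, and $Ap$ has $k$-basis the set of nonzero paths of the form $r'p$. Under $\pi_p$ such a basis path $r$ of $Ae_{t(p)}$ goes to the path $rp$, which is a basis element of $Ap$ if $rp\neq 0$ and is $0$ otherwise; hence $\ker\pi_p$ is the span of
\[
S=\{\,\text{nonzero paths }r\ :\ s(r)=t(p),\ rp=0\,\},
\]
and every path in $S$ is nontrivial because $p\neq 0$.

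The core step is to show $\ker\pi_p=\bigoplus_{q\in L(p)}Aq$ inside $Ae_{t(p)}$. For $q\in L(p)$ one has $q\in S$ and $q=qe_{t(p)}$, so $Aq\subseteq\ker\pi_p$, giving $\sum_{q\in L(p)}Aq\subseteq\ker\pi_p$. For the reverse inclusion, take a basis path $r\in S$; among the initial sub-paths $q$ of $r$ (i.e.\ $r=r'q$) that lie in $S$ --- a nonempty family since it contains $r$ --- pick one of minimal length. A proper initial sub-path of such a $q$ lying in $S$ would be a strictly shorter initial sub-path of $r$ in $S$, which is impossible; hence $q$ has no proper initial sub-path in $S$, i.e.\ $q$ is right-minimal in $S$, so $q\in L(p)$ and $r=r'q\in Aq$. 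Thus $\ker\pi_p=\sum_{q\in L(p)}Aq$. To see the sum is direct, note that $Aq$ ($q\in L(p)$) is spanned by the subset of the path-basis of $Ae_{t(p)}$ consisting of the nonzero paths $r'q$; if a single nonzero path lay in $Aq_1\cap Aq_2$ with $q_1\neq q_2$ in $L(p)$, then $q_1$ and $q_2$ would both be initial sub-paths of it, hence one a proper initial sub-path of the other, contradicting the right-minimality in $S$ of the longer of the two. So these spanning path-sets are pairwise disjoint and $\sum_{q\in L(p)}Aq=\bigoplus_{q\in L(p)}Aq$, which gives exactness of \eqref{equ:1}. Finally, since each $q\in L(p)$ is nontrivial, $\ker\pi_p\subseteq {\rm rad}(Ae_{t(p)})$, which is superfluous in the projective module $Ae_{t(p)}$; hence $\pi_p$ is a projective cover, as claimed.

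The sequence \eqref{equ:2} is proved by the symmetric argument with left modules replaced by right modules throughout: $\pi'_p$ is left multiplication by $p$ restricted to $e_{s(p)}A$, its kernel is spanned by the nonzero paths $\gamma$ with $t(\gamma)=s(p)$ and $p\gamma=0$, and stripping off minimal \emph{final} sub-paths (rather than initial ones) produces exactly the elements of $R(p)$; alternatively one simply applies \eqref{equ:1} to $A^{\rm op}$. I do not expect any serious obstacle here: the whole argument is elementary, and the only point genuinely requiring care is keeping the paper's concatenation convention straight, so that ``initial sub-path of $r$'' is matched with ``right-minimal in $S$'' --- and dually ``final sub-path'' with ``left-minimal'' --- in the refinement and disjointness steps.
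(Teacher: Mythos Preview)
Your proof is correct; the paper itself omits the argument entirely, marking the lemma as a well-known and straightforward fact (with a reference to \cite{Zim}), so you have supplied exactly the elementary path-basis verification one would expect. The only thing to note is that your careful matching of ``initial sub-path'' with ``right-minimal'' is indeed the right reading of the paper's conventions, and nothing further is needed.
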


We will rely on the following fundamental result contained in \cite[Theorem I]{Zim}.

\begin{lem}\label{lem:basic}
Let $M$ be a left $A$-module which fits into an exact sequence $0\rightarrow M\rightarrow P\rightarrow Q$ of $A$-modules with $P, Q$ projective. Then $M$ is isomorphic to a direct sum $\bigoplus Ap^{(\Lambda(p))}$, where $p$ runs over all the nonzero paths in $A$ and each $\Lambda(p)$ is some index set. \hfill $\square$
\end{lem}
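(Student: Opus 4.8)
This is \cite[Theorem I]{Zim}. The plan is to reduce the statement to a claim about submodules of projective modules and then to analyze such submodules by induction on dimension; the combinatorial core of the last part is exactly Zimmermann's argument.

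\textbf{Step 1 (reduction to a first syzygy).} Put $N=\operatorname{im}(P\to Q)\subseteq Q$, so that $0\to M\to P\to N\to 0$ is a projective presentation of $N$. Comparing it with the projective cover $P_N\twoheadrightarrow N$ and splitting off the (projective) kernel of the induced surjection $P\twoheadrightarrow P_N$, one gets $M\cong\Omega(N)\oplus P'$ with $P'$ projective. A projective module is a direct sum of modules $Ae_i$, and $Ae_i=Ap$ for the trivial (hence nonzero) path $p=e_i$; so it is enough to prove that $\Omega(N)$ is a direct sum of modules $Aq$ with $q$ a nonzero path. Thus we may assume $M=\Omega(N)$ with $N$ a submodule of the projective module $Q$.

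\textbf{Step 2 (reduction to submodules of projectives).} It suffices to prove: \emph{every submodule $N$ of a projective $A$-module is isomorphic to a direct sum $\bigoplus Ap^{(\Lambda(p))}$, $p$ ranging over the nonzero paths in $A$.} Granting this and writing $N\cong\bigoplus_\lambda Ap_\lambda$, we get $\Omega(N)\cong\bigoplus_\lambda\Omega(Ap_\lambda)$, since $\Omega$ commutes with direct sums. Now $\Omega(Ap_\lambda)=0$ if $p_\lambda$ is trivial, while $\Omega(Ap_\lambda)\cong\bigoplus_{q\in L(p_\lambda)}Aq$ if $p_\lambda$ is nonzero and nontrivial, by the exact sequence \eqref{equ:1} of Lemma~\ref{lem:trivial}. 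In either case $\Omega(Ap_\lambda)$, and hence $\Omega(N)$, is a direct sum of modules $Aq$.

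\textbf{Step 3 (splitting off projective summands).} Prove the claim displayed in Step 2 by induction on $\dim_k N$, the case $N=0$ being clear. Assume $N\neq 0$. If $N\not\subseteq\operatorname{rad}(Q)$, write $Q=\bigoplus_m Ae_{j_m}$ and pick $m_0$ so that the composite $N\hookrightarrow Q\twoheadrightarrow Ae_{j_{m_0}}/\operatorname{rad}(Ae_{j_{m_0}})$ is nonzero; then some $v\in e_{j_{m_0}}N$ has $m_0$-th component of the form $e_{j_{m_0}}+r$ with $r\in\operatorname{rad}(Ae_{j_{m_0}})$. The endomorphism of $Ae_{j_{m_0}}$ sending $e_{j_{m_0}}$ to this component is onto, hence bijective; it factors as $Ae_{j_{m_0}}\to Av\hookrightarrow Q\twoheadrightarrow Ae_{j_{m_0}}$, so $Av\cong Ae_{j_{m_0}}$ and the projection $Q\to Ae_{j_{m_0}}$ restricts to an isomorphism on $Av$, making $Av$ a direct summand of $Q$, say $Q=Av\oplus Q'$. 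Since $Av\subseteq N$ this yields $N=Av\oplus(N\cap Q')$, where $N\cap Q'$ is a submodule of the projective $Q'$ of strictly smaller dimension; as $Av\cong Ae_{j_{m_0}}=Ap$ with $p$ trivial, the induction closes this case. So we may assume $N\subseteq\operatorname{rad}(Q)$.

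\textbf{Step 4 (the heart, and the main obstacle).} It remains to treat $N\subseteq\operatorname{rad}(Q)$. Here $\operatorname{rad}(Q)=\bigoplus_m\bigoplus_{\alpha\colon s(\alpha)=j_m}A\alpha$ is already a direct sum of modules $A\alpha$, and one would again like to peel off a cyclic direct summand of $N$ isomorphic to some $Ap$, $p$ a nonzero path, and then induct on $\dim_k N$. The naive attempt --- choose $v\in N$ whose component in some summand $A\alpha$ is $\alpha$ plus higher-radical terms, and split off $Av$ --- does not reduce to Step 3, for two reasons: the summands $A\alpha$ of $\operatorname{rad}(Q)$ are in general \emph{not} projective, so $Av$ need not be isomorphic to $A\alpha$ (it may be a strictly larger cyclic module); and even identifying the nonzero path $p$ with $Av\cong Ap$ is delicate, because a product $uw$ of two nonzero paths of positive length can arise from several distinct factorizations, so a crude leading-term argument does not pin down the annihilator of $v$. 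Overcoming this requires a careful analysis of the combinatorial (essentially forest-like) structure of the nonzero paths of the monomial algebra and of how they overlap. This is precisely the content of \cite[Theorem I]{Zim}, on which we rely. I expect this fourth step to be the only genuinely hard one: Steps 1 and 3 are formal, and the combinatorics of the monomial algebra enters only through Lemma~\ref{lem:trivial} in Step 2 and, decisively, in Step 4.
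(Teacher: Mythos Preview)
The paper supplies no proof of this lemma: it is stated with a trailing $\square$ and attributed wholesale to \cite[Theorem~I]{Zim}. Your proposal likewise, in the end, appeals to \cite[Theorem~I]{Zim} for the substantive step (your Step~4), so on the essential point the two agree.

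Your Steps 1--3 are correct as written, but Steps 1--2 take a needless detour. The claim you isolate in Step~2 --- that \emph{every} submodule of a projective $A$-module is a direct sum of modules $Ap$ --- already applies to $M$ itself, since $M\subseteq P$ with $P$ projective; there is no need to pass to $N=\operatorname{im}(P\to Q)$, decompose $N$, and then recover a decomposition of $M\cong\Omega(N)\oplus P'$ via Lemma~\ref{lem:trivial}. In fact your Step~2 claim is precisely the content of \cite[Theorem~I]{Zim}, and it is \emph{a priori stronger} than the lemma as stated (the lemma adds the hypothesis that the cokernel $P/M$ embeds in a projective), so what you call a ``reduction'' is really an invocation of a statement at least as strong as the one you set out to prove. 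None of this is a logical error, and you correctly diagnose that all the genuine work --- the combinatorics of nonzero paths in a monomial algebra --- lives entirely in Step~4, which is Zimmermann Huisgen's theorem.
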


The main notion we need is as follows.

\begin{defn}\label{defn:perfectpair}
Let $A=kQ/I$ be a monomial algebra as above. We call a pair $(p, q)$ of nonzero paths in $A$ \emph{perfect} provided that the following conditions are satisfied:
\begin{enumerate}
\item[(P1)] both of the nonzero paths $p,q$  are nontrivial satisfying $s(p)=t(q)$ and $pq=0$ in $A$;
\item[(P2)] if $pq'=0$ for a nonzero path $q'$ with $t(q')=s(p)$, then $q'=qq''$ for some path $q''$; in other words, $R(p)=\{q\}$;
\item[(P3)] if $p'q=0$  for a nonzero path $p'$ with $s(p')=t(q)$, then $p'=p''p$ form some path $p''$; in other words, $L(q)=\{p\}$.\hfill $\square$
\end{enumerate}
\end{defn}

Let $(p, q)$ be a perfect pair. Applying (P3) to (\ref{equ:1}),  we have the following exact sequences of left $A$-modules
\begin{align}\label{equ:3}
0\longrightarrow  Ap\stackrel{\rm inc}\longrightarrow Ae_{t(q)}\stackrel{\pi_q}\longrightarrow Aq\longrightarrow 0.
\end{align}
In particular, we have that $\Omega(Aq)\simeq Ap$.

The following result seems to be convenient for computing  perfect pairs.

\begin{lem}\label{lem:char}
Recall that $\mathbf{F}$ denotes the finite set of minimal paths contained in $I$. Let $p$ and $q$ be nonzero nontrivial paths in $A$ satisfying $s(p)=t(q)$. Then the pair $(p, q)$ is perfect if and only if the following three conditions are satisfied:
\begin{enumerate}
\item[${\rm (P'1)}$] the concatenation $pq$ lies in $\mathbf{F}$;
\item[${\rm (P'2)}$] if $q'$ is a nonzero path in $A$ satisfying $t(q')=s(p)$ and $pq'=\gamma \delta$ for a path $\gamma$  and some path $\delta\in \mathbf{F}$, then $q'=qx$ for some path $x$;
\item[${\rm (P'3)}$]  if $p'$ is a nonzero path satisfying $s(p')=t(q)$ and $p'q=\delta \gamma$ for a path $\gamma$ and some path $\delta\in \mathbf{F}$, then $p'=yp$ for some path $y$.
\end{enumerate}
\end{lem}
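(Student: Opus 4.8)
The plan is to establish the two implications separately, the common engine being the elementary combinatorics of a monomial algebra: a path equals zero in $A$ precisely when it has a sub-path lying in $\mathbf{F}$. I will use repeatedly the following \emph{straddling observation}: if $u,v$ are nonzero paths with $s(u)=t(v)$ and $\delta\in\mathbf{F}$ is a sub-path of the path $uv$, then $\delta$ lies in neither $u$ alone nor $v$ alone (both being nonzero), so $\delta = u_1v_1$, where $u_1$ is the source-end part of $u$ (thus $u=u_0u_1$) and $v_1$ is the target-end part of $v$ (thus $v=v_1v_0$), with $u_1,v_1$ nontrivial. The only genuine care the proof requires is the bookkeeping imposed by the right-to-left concatenation convention, i.e.\ keeping straight which factor is a source part and which a target part.

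\emph{The forward direction.} Suppose $(p,q)$ is perfect. By (P1), $pq=0$, so we may pick $\delta\in\mathbf{F}$ a sub-path of $pq$; by the straddling observation $\delta = p_1q_1$ with $p=p_0p_1$ and $q=q_1q_0$. Since $q_1$ is nonzero with $t(q_1)=s(p)$ and $pq_1 = p_0\delta\in I$, condition (P2) forces $q_1 = qq''$ for some $q''$; comparing with $q = q_1q_0$ yields $qq''q_0 = q$, so $q''$ and $q_0$ are trivial and $q_1=q$. Then $\delta = p_1q$, and since $p_1$ is nonzero with $s(p_1)=t(q)$ and $p_1q=\delta\in I$, condition (P3) forces $p_1 = p''p$; comparing with $p=p_0p_1$ gives $p_1=p$. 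Hence $pq=\delta\in\mathbf{F}$, which is (P'1). For (P'2): if $q'$ is nonzero with $t(q')=s(p)$ and $pq'=\gamma\delta$ with $\delta\in\mathbf{F}$, then $pq'=\gamma\delta\in I$ (as $I$ is a two-sided ideal), so $pq'=0$ and (P2) gives $q'=qq''$, which is the required $q'=qx$. Condition (P'3) follows from (P3) identically.

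\emph{The backward direction.} Assume (P'1), (P'2) and (P'3). Condition (P1) is immediate: $p$ and $q$ are nonzero nontrivial with $s(p)=t(q)$ by hypothesis, and $pq\in\mathbf{F}\subseteq I$ gives $pq=0$. For (P2), let $q'$ be nonzero with $t(q')=s(p)$ and $pq'=0$, and pick $\delta\in\mathbf{F}$ a sub-path of $pq'$; by the straddling observation $\delta=p_1q_1'$ with $p=p_0p_1$ and $q'=q_1'q_0'$, where $q_1'$ is the target-end part of $q'$. Then $pq_1' = p_0p_1q_1' = p_0\delta$ is of the form $\gamma\delta$, and $q_1'$ is nonzero with $t(q_1')=t(q')=s(p)$, so (P'2) applies and yields $q_1'=qx$ for some $x$; hence $q'=q_1'q_0'=q(xq_0')$, so $q'=qq''$ with $q''=xq_0'$. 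The verification of (P3) is symmetric via (P'3): from a nonzero $p'$ with $s(p')=t(q)$ and $p'q=0$, extract a straddling $\delta=p_1'q_1\in\mathbf{F}$ with $p'=p_0'p_1'$ and $q=q_1q_0$; then $p_1'q = p_1'q_1q_0 = \delta q_0$ is of the form $\delta\gamma$, so (P'3) gives $p_1'=yp$, whence $p'=p_0'p_1'=p''p$ with $p''=p_0'y$.

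\emph{The main point.} There is no deep obstacle; the argument is a careful combinatorial bookkeeping. The only subtlety is the straddling observation together with the resulting fact that $\delta$ sits at the source end of the truncated product $pq_1'$ (respectively at the target end of $p_1'q$) — which is exactly what licenses the use of (P'2) (respectively (P'3)) — and keeping the concatenation direction consistent, so that for instance the relation $q'=qq''$ genuinely expresses that $q$ is the target-end part of $q'$.
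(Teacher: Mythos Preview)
Your proof is correct and follows essentially the same approach as the paper's: both directions hinge on locating a minimal relation $\delta\in\mathbf{F}$ inside the relevant product and using that $\delta$ must straddle the concatenation point (since each factor is nonzero), then invoking (P2)/(P3) or (P'2)/(P'3) to pin down the remaining factors. Your named ``straddling observation'' makes explicit what the paper does tacitly when it writes $pq=\gamma_2\delta\gamma_1$ and argues that $\gamma_1,\gamma_2$ are trivial, but the logical content is the same.
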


\begin{proof}
For the ``only if" part, we assume that $(p,q)$ is a perfect pair. By (P1) we have $pq=\gamma_2\delta \gamma_1$ with $\delta\in \mathbf{F}$ and some paths $\gamma_1$ and $\gamma_2$. We claim that $\gamma_1=e_{s(q)}$. Otherwise, $q=q'\gamma_1$ for a proper sub-path $q'$, and  thus $pq'=\gamma_2\delta$ which equals zero in $A$. This contradicts (P2). Similarly, we have $\gamma_2=e_{t(p)}$. Then we infer ${\rm (P'1)}$. The conditions ${\rm (P'2)}$ and ${\rm (P'3)}$ follow from (P2) and (P3) immediately.

For the ``if" part, we observe that the condition (P1) is immediate. For (P2), assume that $pq'=0$ in $A$, that is, $pq'=\gamma\delta\gamma_1$ with $\delta\in \mathbf{F}$ and some paths $\gamma$ and $\gamma_1$. We have $q'=q''\gamma_1$ for some path $q''$, since the path $p$ is nonzero. Then we have $pq''=\gamma\delta$. By  ${\rm (P'2)}$ we infer that $q''=qx$ and thus $q'=q(x\gamma_1)$. This proves (P2). Similarly, we have (P3).
\end{proof}

We study some basic properties of a perfect pair in the following lemmas.

\begin{lem}\label{lem:unique}
Let $(p, q)$ and $(p', q')$ be two perfect pairs. Then the following statements are equivalent:
\begin{enumerate}
\item $(p, q)=(p', q')$;
\item there is an isomorphism $Aq\simeq Aq'$ of left $A$-modules;
\item there is an isomorphism $pA\simeq p'A$ of right $A$-modules.
\end{enumerate}
\end{lem}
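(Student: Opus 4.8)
The plan is to prove the cycle of implications $(1)\Rightarrow(2)\Rightarrow(1)$ and $(1)\Rightarrow(3)\Rightarrow(1)$, where the implications from $(1)$ are trivial. So the real content is recovering a perfect pair from the isomorphism class of the cyclic module it determines. I will first treat $(2)\Rightarrow(1)$; the argument for $(3)\Rightarrow(1)$ is dual, working with right modules $pA$ in place of left modules $Aq$ and using the exact sequence (\ref{equ:2}) in place of (\ref{equ:1}).

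For $(2)\Rightarrow(1)$, suppose $Aq\simeq Aq'$ as left $A$-modules. The first step is to pin down $t(q)$ from the module $Aq$: since $\pi_q\colon Ae_{t(q)}\to Aq$ is a projective cover (Lemma \ref{lem:trivial}), the projective cover of $Aq$ is $Ae_{t(q)}$, and similarly for $Aq'$; as $Ae_i\simeq Ae_j$ forces $i=j$ for vertices of the quiver, the isomorphism $Aq\simeq Aq'$ gives $t(q)=t(q')$, and the isomorphism can be chosen compatibly with the projective covers. The second step is to look at tops: the top of $Aq$ is the simple module $S_{t(q)}$ with multiplicity one, so $Aq$ is generated by any element mapping to a generator of the top; in particular $q$ and $q'$ (viewed inside $Aq\simeq Aq'$ via a fixed isomorphism, after adjusting by a scalar) both generate, and the generator of a cyclic module $Ap$ with $p$ a path is characterized up to scalar as an element of degree equal to the length of $p$ lying outside $J\cdot(Ap)$. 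The third and decisive step is to compare syzygies: by (\ref{equ:3}) we have $\Omega(Aq)\simeq Ap$ and $\Omega(Aq')\simeq Ap'$ as submodules of $Ae_{t(q)}$, realized as the inclusions of $Ap$ and $Ap'$; an isomorphism $Aq\simeq Aq'$ lifting to an automorphism of $Ae_{t(q)}$ carries $Ap$ onto $Ap'$ inside $Ae_{t(q)}$, and an automorphism of the indecomposable projective $Ae_{t(q)}$ sends the submodule $Ap$ to $A\lambda p$ for a scalar $\lambda$ plus possibly higher-length corrections; since $Ap$ has a basis of nonzero paths and $p$ is the unique such basis element of minimal length, one concludes $Ap=Ap'$ as submodules, hence $p=p'$ as paths. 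Finally, with $p=p'$ fixed, condition (P2) for the pair $(p,q)$ reads $R(p)=\{q\}$ and for $(p',q')=(p,q')$ reads $R(p)=\{q'\}$, forcing $q=q'$.

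The step I expect to be the main obstacle is the syzygy comparison: one must argue carefully that an isomorphism $Aq\xrightarrow{\sim}Aq'$ can be lifted to an \emph{iso}morphism of their projective covers $Ae_{t(q)}\to Ae_{t(q)}$ (this is standard for projective covers, since a lift of an isomorphism between modules with the same projective cover is automatically an automorphism by Nakayama), and then that an automorphism of the \emph{local} ring's principal module $Ae_{t(q)}$ moves the cyclic submodule $Ap$ only to $A p$ itself and not to some genuinely different $Ap'$. Here one uses that $\mathrm{End}_A(Ae_{t(q)})\cong e_{t(q)}Ae_{t(q)}$ is a local ring whose non-invertible elements are spanned by nonzero oriented cycles at $t(q)$ of positive length, so any automorphism acts on the path $p$ (of length, say, $n$) by $p\mapsto \lambda p + (\text{terms of length }>n)$ with $\lambda\neq 0$; intersecting with the span of paths of length $\le n$ shows the image submodule still contains a scalar multiple of $p$ and is contained in $Ap + J^{n+1}e_{t(q)}$, and a short argument with the path basis of $Ae_{t(q)}$ upgrades this to $Ap'=Ap$. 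Alternatively, and perhaps more cleanly, one can bypass the lifting by invoking Lemma \ref{lem:Hom}: since $\theta_p$ and $\theta_{p'}$ are isomorphisms (as $p, p'$ generate modules appearing as syzygies, one checks $\mathrm{Ext}^1_A(Aq,A)=0$ gives what is needed, or one argues directly), $\underline{\mathrm{Hom}}_A(Ap,Ap')\cong (pA\cap Ap')/pAp'$ and an iso $Ap\simeq Ap'$ must be nonstable, i.e.\ come from an actual path in $pA\cap Ap'$ of the right length, which forces $p$ and $p'$ to be comparable as paths and then equal by symmetry. I would present the direct lifting argument as the main line and mention the $\underline{\mathrm{Hom}}$ computation as the conceptual reason it works.
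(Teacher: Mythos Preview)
Your main line of argument is correct and is essentially the paper's proof. Both lift the isomorphism $Aq\simeq Aq'$ to an automorphism $\psi$ of the common projective cover $Ae_{t(q)}$, write $\psi(e_{t(q)})=\lambda e_{t(q)}+\sum\lambda(\gamma)\gamma$ with $\lambda\neq 0$, so that $\psi(p)=\lambda p+\text{(strictly longer paths)}$, and then compare kernels. The paper phrases the endgame as $\psi(p)q'=0\Rightarrow pq'=0$ (length argument) $\Rightarrow p=\delta p'$ by (P3), and symmetrically $p'=\delta'p$, whence $p=p'$; your phrasing ($\psi(p)\in Ap'$, and since $Ap'$ is the span of a subset of the path basis of $Ae_{t(q)}$, the basis element $p$ appearing in $\psi(p)$ with nonzero coefficient must already lie in $Ap'$, hence $p=\delta p'$) is the same step, invoking the path basis of $Ap'$ rather than (P3) by name. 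Both finish with $R(p)=\{q\}=\{q'\}$, hence $q=q'$. The dual argument for $(3)\Rightarrow(1)$ is exactly as you say.

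Two remarks. First, your ``second step'' about tops and generators is superfluous and somewhat muddled; the projective-cover lift already gives $t(q)=t(q')$, and the kernel comparison does all the remaining work, so that paragraph should be dropped. Second, your proposed alternative via Lemma~\ref{lem:Hom} has a genuine gap: to apply that lemma with $a=p$ you need $\theta_p$ to be an isomorphism, but Lemma~\ref{lem:iso} only yields $\theta_q$ and $\theta'_p$ for a perfect pair $(p,q)$, not $\theta_p$. At this stage one does not know that $p$ is the second coordinate of some perfect pair (that is precisely what the relation-cycle machinery establishes later), and the appeal to $\mathrm{Ext}^1_A(Aq,A)=0$ is likewise unavailable since nothing about Gorenstein-projectivity has been proved yet. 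Discard that alternative and keep the direct lifting argument.
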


\begin{proof}
We only prove ``(2)$\Rightarrow$(1)". Assume that $\phi\colon Aq\rightarrow Aq'$ is an isomorphism. Consider the projective covers $\pi_q\colon Ae_{t(q)}\rightarrow Aq$ and $\pi_{q'}\colon Ae_{t(q')}\rightarrow Aq'$. Then there is an isomorphism $\psi\colon Ae_{t(q)}\rightarrow Ae_{t(q')}$ such that $\pi_{q'}\circ \psi=\phi\circ \pi_{q}$. In particular, we have $t(q)=t(q')$.

Assume that $\psi(e_{t(q)})=\lambda e_{t(q)} +\sum \lambda(\gamma)\gamma$, where $\lambda$ and $\lambda(\gamma)$ are in $k$ and $\gamma$ runs over all the nonzero nontrivial paths that start at $t(q)$. Since $\psi$ is an isomorphism, we infer that $\lambda \neq 0$. We observe that $\psi(p)=\lambda p+\sum \lambda(\gamma)p\gamma$. Recall that  $\pi_q(p)=pq=0$. By $\pi_{q'}\circ \psi=\phi\circ \pi_{q}$, we have $\psi(p)q'=0$. We then infer that $pq'=0$ and thus $p=\delta p'$ for some path $\delta$. Similarly, we have $p'=\delta' p$. We conclude that $p=p'$. Since $R(p)=\{q\}$ and $R(p')=\{q'\}$, we infer that $q=q'$. Then we are done.
\end{proof}

We recall the homomorphisms in (\ref{equ:0}) and (\ref{equ:00}).

\begin{lem}\label{lem:iso}
Let $(p,q)$ be a perfect pair. Then both $\theta_q$ and $\theta'_p$ are isomorphisms.
\end{lem}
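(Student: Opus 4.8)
The plan is to prove that $\theta_q\colon qA\to (Aq)^*$ is an isomorphism; the statement for $\theta'_p$ then follows by the left-right symmetry of the notion of a perfect pair, applied to the perfect pair $(p,q)$ viewed in $A^{\mathrm{op}}$ (where $(q,p)$ becomes a perfect pair, since conditions (P1)--(P3) are symmetric under reversing all arrows and swapping the roles of $p$ and $q$). So I will only treat $\theta_q$.

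Recall from \eqref{equ:0} that $\theta_q$ is always an injective homomorphism of right $A$-modules, so it suffices to compare dimensions, i.e.\ to show $\dim_k qA=\dim_k (Aq)^*={\rm Hom}_A(Aq,A)$. For the right-hand side I would apply the contravariant functor ${\rm Hom}_A(-,A)$ to the exact sequence \eqref{equ:3}, namely $0\to Ap\to Ae_{t(q)}\to Aq\to 0$, which exists because $(p,q)$ is perfect (condition (P3) forces $L(q)=\{p\}$). This gives a left-exact sequence
\begin{align*}
0\longrightarrow (Aq)^*\longrightarrow (Ae_{t(q)})^*\longrightarrow (Ap)^*.
\end{align*}
Here $(Ae_{t(q)})^*\cong e_{t(q)}A$ via the isomorphism $\theta_{e_{t(q)}}$ (idempotent case). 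The key point is then to identify the image of the connecting map $(Ae_{t(q)})^*\to (Ap)^*$, equivalently the cokernel $(Aq)^*$, as the span of nonzero paths $q'$ with $s(q')=t(q)$ and $qq'$ (no — rather, paths through which the inclusion $Ap\hookrightarrow Ae_{t(q)}$ does not extend). Concretely: a homomorphism $Ae_{t(q)}\to A$ is given by right multiplication by some $x\in e_{t(q)}A$, and it restricts to zero on $Aq$ precisely when... no; it factors through $Aq$ (i.e.\ lies in the image of $(Aq)^*$) precisely when its restriction to the submodule $Ap\subseteq Ae_{t(q)}$ vanishes, that is, when $px=0$ in $A$. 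Thus $(Aq)^*$ is identified with $\{x\in e_{t(q)}A\mid px=0\}=\{x\in A\mid s(x)\ \text{appropriate},\ px=0\}$, which is exactly the right annihilator consideration governed by $R(p)$.

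Now I would compute both sides using Lemma \ref{lem:trivial}. On one hand, $qA$ has $k$-basis the nonzero paths of the form $q\gamma'$; on the other hand, since $(p,q)$ is perfect we have $R(p)=\{q\}$ by (P2), so the exact sequence \eqref{equ:2} for the path $p$ reads $0\to qA\to e_{s(p)}A\to pA\to 0$, and note $s(p)=t(q)$. Comparing, $\{x\in e_{t(q)}A\mid px=0\}$ is precisely the kernel of right multiplication $e_{s(p)}A\to pA$, $e_{s(p)}\mapsto p$, which is the image of $qA$ under the inclusion in \eqref{equ:2}, hence has dimension $\dim_k qA$. Therefore $\dim_k (Aq)^*=\dim_k qA$, and since $\theta_q$ is injective it is an isomorphism. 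The main obstacle is the bookkeeping in the middle step — carefully checking that ``$\varphi\colon Ae_{t(q)}\to A$ factors through $\pi_q$'' is equivalent to ``$\varphi|_{Ap}=0$'' is equivalent to ``$px=0$ where $\varphi=\theta_{e_{t(q)}}^{-1}$-corresponds to $x$'' — but this is routine once the sequence \eqref{equ:3} and the identification $(Ae_{t(q)})^*\cong e_{t(q)}A$ are in hand; no genuinely hard estimate is involved.
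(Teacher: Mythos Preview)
Your proof is correct and rests on the same key observation as the paper's: the right annihilator of $p$ in $e_{t(q)}A$ equals $qA$, by condition (P2). The paper uses this to show $\theta_q$ is surjective directly --- given $f\in (Aq)^*$, one has $p\,f(q)=f(pq)=0$, so $f(q)\in qA$ by (P2), whence $f=\theta_q(f(q))$ --- whereas you package the same computation as a dimension count via the two exact sequences (\ref{equ:3}) and (\ref{equ:2}).
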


\begin{proof}
We have mentioned that the map $\theta_q$ is a monomorphism.

To show that it is epic, take a homomorphism $f\colon Aq\rightarrow A$ of left $A$-modules. Since $q=e_{t(q)}q$, we infer that $f(q)$ belongs to $e_{t(q)}A$. We assume that $f(q)=\sum \lambda(\gamma) \gamma$, where each $\lambda(\gamma)$ is in $k$ and $\gamma$ runs over all nonzero paths terminating at $t(q)$. By $pq=0$, we deduce that $p\gamma=0$ for those $\gamma$ with $\lambda(\gamma)\neq 0$. By (P2) each $\gamma$ lies in $qA$. Therefore, we infer that $f(q)$ lies in $qA$, and thus $\theta_q(f(q))=f$. Then we infer that $\theta_q$ is an isomorphism. Dually, one proves that $\theta'_p$ is also an isomorphism.
\end{proof}

The following notion plays a central role in this paper. 

\begin{defn}\label{defn:perfectpaths}
Let $A=kQ/I$ be a monomial algebra. We call a nonzero path $p$ in $A$ a \emph{perfect path}, provided that there exists a sequence $$p=p_1, p_2, \cdots, p_n, p_{n+1}=p$$
 of nonzero paths such that $(p_i, p_{i+1})$ are perfect pairs for all $1\leq i\leq n$. If the given nonzero paths $p_i$ are pairwise distinct, we refer to the sequence $p=p_1, p_2, \cdots, p_n, p_{n+1}=p$ as a \emph{relation-cycle} for $p$. \hfill $\square$
\end{defn}

We mention that by Definition \ref{defn:perfectpair}(P2) a perfect path has a unique relation-cycle and the  notion of a relation-cycle is analogous to  cyclic paths considered in \cite[Section 2]{Kal} for gentle algebras.

Let $n\geq 1$. By a \emph{basic ($n$-)cycle}, we mean a quiver consisting of $n$ vertices and $n$ arrows which form an oriented cycle.

\begin{exm}\label{exm:truncated}
{\rm Let $Q$ be a connected quiver and let $d\geq 2$. Recall that $J$ denotes the two-sided  ideal of $kQ$ generated by arrows. The monomial algebra $A=kQ/J^d$ is called a \emph{truncated quiver algebra}. When the quiver $Q$ has no sources or sinks, we claim that $A$ has a perfect path if and only if the quiver $Q$ is a basic cycle.

Indeed, let $(p, q)$ be a perfect pair. Assume that $p=\alpha_n\cdots \alpha_2\alpha_1$ and $q=\beta_m\cdots \beta_2\beta_1$. We observe that each $\alpha_i$ is the unique arrow  starting at $s(\alpha_i)$. Otherwise, $L(q)$ has at least two elements, contradicting to (P3). Here, we use the fact that $Q$ has no sinks.  Similarly, we observe that each $\beta_j$ is the unique arrow terminating at $t(\beta_j)$, using the fact that $Q$ has no souces. Let $p$ be a perfect path with a relation-cycle  $p=p_1, p_2, \cdots, p_n, p_{n+1}=p$. We apply the above observations to the perfect pairs $(p_i, p_{i+1})$. Then we obtain that the quiver $Q$ is a basic cycle.

On the other hand, if $Q$ is a basic cycle, then any nonzero nontrivial path $p$ of length strictly less than $d-1$ is perfect. }
\end{exm}

\section{The Gorenstein-projective modules over a monomial algebra}

In this section, we parameterize the isoclasses of indecomposable non-projective Gorenstein-projective modules over a monomial algebra by perfect paths.

Recall that for a finite-dimensional algebra $A$, the set ${\rm ind}\; A\mbox{-\underline{\rm Gproj}}$ of isoclasses of indecomposable objects in the stable category  $A\mbox{-\underline{\rm Gproj}}$  is identified with the set of isoclasses of indecomposable non-projective Gorenstein-projective $A$-modules.

 The main result of this paper is as follows.

\begin{thm}\label{thm:main}
Let $A$ be a monomial algebra. Then there is a bijection
$$\{\mbox{perfect paths in }A\}\stackrel{1:1} \longleftrightarrow {\rm ind}\; A\mbox{-\underline{\rm Gproj}}$$
sending a perfect path $p$ to the $A$-module $Ap$.
\end{thm}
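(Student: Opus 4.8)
The plan is to establish the bijection by constructing maps in both directions and showing they are mutually inverse, using the machinery of perfect pairs developed in Section 3 together with Lemma \ref{lem:basic}.

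First I would show that if $p$ is a perfect path, then $Ap$ is an indecomposable non-projective Gorenstein-projective module. Indecomposability and non-projectivity should follow from the structure of $Ap$ (a cyclic module on a nontrivial path cannot be projective since $Ap \subsetneq Ae_{t(p)}$ properly, as $p$ is nonzero nontrivial and $L(p)\neq\emptyset$ by (P1) applied to the pair $(p,p_2)$; indecomposability because $Ap$ has simple top). For Gorenstein-projectivity, I would use the relation-cycle $p=p_1,p_2,\dots,p_{n+1}=p$: by the exact sequence (\ref{equ:3}) applied to each perfect pair $(p_i,p_{i+1})$, we get short exact sequences $0\to Ap_i\to Ae_{t(p_{i+1})}\to Ap_{i+1}\to 0$, and splicing these cyclically produces an acyclic complex $P^\bullet$ of projectives with $Ap$ appearing as a cocycle and with period $n$. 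It remains to check that $(P^\bullet)^*$ is acyclic; for this I would dualize the sequences (\ref{equ:3}), using Lemma \ref{lem:iso} (which gives that $\theta_q$ and $\theta'_p$ are isomorphisms for a perfect pair $(p,q)$) to identify the duals $(Ap_i)^* \cong p_iA$ and to see that applying ${\rm Hom}_A(-,A)$ to $0\to Ap_i\to Ae_{t(p_{i+1})}\to Ap_{i+1}\to 0$ yields exactly the sequence (\ref{equ:2}) for $p_i$, which is exact. Thus $(P^\bullet)^*$ is the cyclic splice of exact sequences, hence acyclic, and $Ap$ is Gorenstein-projective.

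Next I would show the map is injective: this is essentially Lemma \ref{lem:unique}, which says distinct perfect pairs give non-isomorphic modules $Aq$; combined with the fact that a perfect path determines and is determined by its unique relation-cycle (and hence by the pair it sits in), distinct perfect paths $p$ give non-isomorphic $Ap$. For surjectivity, let $M$ be an indecomposable non-projective Gorenstein-projective $A$-module. Since $M$ is Gorenstein-projective it embeds into a projective with projective cokernel term (take two steps of an acyclic complex), so by Lemma \ref{lem:basic}, $M\cong \bigoplus Ap^{(\Lambda(p))}$; indecomposability and non-projectivity force $M\cong Ap$ for a single nonzero nontrivial path $p$. The task is then to prove $p$ is perfect. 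Using Lemma \ref{lem:general}(2), $Ap\cong \Omega(N)$ for some indecomposable non-projective Gorenstein-projective $N$, which is again of the form $Aq$; comparing with the exact sequence (\ref{equ:1}) for $q$ and using that $\Omega(Aq)=\bigoplus_{q'\in L(q)}Aq'$ is indecomposable, we get $L(q)=\{p\}$, i.e. (P3) holds for the pair $(p,q)$. Symmetrically, applying the dual considerations to the right module $pA$ — which is Gorenstein-projective over $A^{\rm op}$ because ${\rm Ext}^i_A(Ap,A)=0$ and $\theta'_p$ is relevant, or more directly because the dual of the acyclic totally-acyclic complex witnessing $Ap\in A\mbox{-Gproj}$ witnesses $pA\in A^{\rm op}\mbox{-Gproj}$ — gives (P2). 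Thus $(p,q)$ is a perfect pair. Iterating: $N=Aq$ is again non-projective indecomposable Gorenstein-projective, so $q$ satisfies the same, producing $(q,q')$ perfect, etc.; since there are only finitely many nonzero paths, the sequence $p, q, q', \dots$ must cycle, yielding a relation-cycle and showing $p$ is perfect.

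The main obstacle I expect is the surjectivity direction, specifically the bookkeeping needed to pass from "$Ap$ is Gorenstein-projective and indecomposable" to the combinatorial conclusion that $p$ lies in a relation-cycle. The delicate points are: (i) verifying that the right module $pA$ inherits Gorenstein-projectivity over $A^{\rm op}$ so that the dual syzygy argument is available (this should follow from the total acyclicity in the definition, since ${\rm Hom}_A(-,A)$ sends a complete resolution of $Ap$ to a complete resolution of a cocycle, which by Lemma \ref{lem:basic} over $A^{\rm op}$ is a sum of modules $\gamma A$); (ii) ensuring that the syzygy $\Omega(Ap)$, which a priori is $\bigoplus_{q\in L(p)}Aq$, is not merely indecomposable but actually equals a single $Aq$ with $(q, \cdot)$ fitting the perfect-pair pattern — here Lemma \ref{lem:general}(1) guarantees indecomposability of the syzygy, forcing $|L(p)|=1$, and then one must check the minimality/uniqueness conditions (P2), (P3) for the resulting pair using Lemma \ref{lem:char}. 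Everything else is a matter of assembling Lemmas \ref{lem:general}, \ref{lem:basic}, \ref{lem:unique}, \ref{lem:iso} and the exact sequences (\ref{equ:1})--(\ref{equ:3}).
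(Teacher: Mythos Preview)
Your well-definedness and injectivity arguments align with the paper's (Proposition~\ref{prop:bijection}(4) and Lemma~\ref{lem:unique}). The surjectivity argument, however, has a real gap. After writing $M \cong Ap$ via Lemma~\ref{lem:basic}, you set out to prove that \emph{this} $p$ is perfect. But $p$ is not uniquely determined by $M$, and not every choice works: the paper's example immediately following Proposition~\ref{prop:bijection} exhibits a path $p = \beta\gamma$ with $Ap$ indecomposable non-projective Gorenstein-projective yet $p$ not perfect. Relatedly, from $M \cong \Omega(Aq)$ you only obtain $L(q) = \{p'\}$ for some $p'$ with $Ap' \cong Ap$, not $L(q) = \{p\}$. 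The paper sidesteps this entirely: it never claims the originally chosen $p$ is perfect, but instead proves that the unique element $q$ of $L(p)$ is perfect and that $M \cong Aq$ (Proposition~\ref{prop:bijection}(2),(3)).

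The deeper problem is your duality step for (P2). You assert that $pA$ is Gorenstein-projective over $A^{\rm op}$ because the dual of a complete resolution of $Ap$ is a complete resolution whose cocycle ``is'' $pA$. But that cocycle is $(Ap)^*$, and $\theta_p \colon pA \to (Ap)^*$ is an isomorphism only under an extra hypothesis --- one that fails for $p = \beta\gamma$ above. The paper's substitute is the commutative diagram in the proof of Proposition~\ref{prop:bijection}(1): from $L(p) = \{q\}$ and ${\rm Ext}^1_A(Ap, A) = 0$ alone, comparing the dualized sequence $\eta^*$ with $0 \to pA \to e_{s(q)}A \to qA \to 0$ via the vertical maps $\theta_p, \theta_{e_{s(q)}}, \theta_q$ forces $\theta_q$ to be an isomorphism (no hypothesis on $\theta_p$ is needed for this conclusion). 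One then applies the full argument of (1) to $q$ rather than to $p$: now $\theta_q$ \emph{is} known to be an isomorphism, and the same diagram with $q$ in place of $p$ identifies $R(q')=\{q\}$ for the next syzygy path, producing a genuine perfect pair. Iterating and closing up via Lemma~\ref{lem:general}(3) and Lemma~\ref{lem:unique} gives the relation-cycle. This bootstrapping of the $\theta$-isomorphism along syzygies is precisely the ingredient missing from your outline.
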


\begin{proof} The well-definedness of the map is due to Proposition \ref{prop:bijection}(4). The surjectivity follows from  Proposition \ref{prop:bijection}(3). The injectivity follows from Lemma \ref{lem:unique}.
\end{proof}

\begin{rem}
We mention that by Theorem \ref{thm:main} considerable information on the stable category $A\mbox{-\underline{\rm Gproj}}$ is already obtained. For example, the syzygy functor $\Omega$ on indecomposable objects is computed by (\ref{equ:3}), and the Hom-spaces between indecomposable objects are computed by Lemma \ref{lem:Hom}.

In general, we do not have a complete description for $A\mbox{-\underline{\rm Gproj}}$  as a triangulated category. We do have such a description in the quadratic monomial case, and slightly more generally, in the case that there is no overlap in $A$; see Theorem \ref{thm:stable} and Proposition \ref{prop:overlap}.
\end{rem}

The map $\theta_p$ in the following proposition is introduced in (\ref{equ:0}).

\begin{prop}\label{prop:bijection}
Let $A=kQ/I$ be a monomial algebra. Then the following statements hold.
\begin{enumerate}
\item For a nonzero nontrivial path $p$ satisfying that the morphism $\theta_p$ is an isomorphism, if the $A$-module $Ap$ is non-projective Gorenstein-projective, then the path $p$ is perfect.
\item For a nonzero nontrivial path $p$, if the $A$-module $Ap$ is non-projective Gorenstein-projective, then there is a unique perfect path $q$ such that  $L(p)=\{q\}$.
\item Let $M$ be an indecomposable non-projective Gorenstein-projective $A$-module. Then there exists a perfect path $q$ satisfying that  $M\simeq Aq$.
\item For a perfect path $p$, the $A$-module $Ap$ is non-projective Gorenstein-projective.

\end{enumerate}
\end{prop}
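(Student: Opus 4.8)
The plan is to prove the four statements in a way that interlocks them, since they are clearly meant to be proved together rather than in isolation. I would begin with statement (4), which is the one the theorem's well-definedness rests on, and which has the most self-contained flavour. Given a perfect path $p$, unwind its relation-cycle $p=p_1,p_2,\dots,p_n,p_{n+1}=p$. For each perfect pair $(p_i,p_{i+1})$, the exact sequence (\ref{equ:3}) gives $0\to Ap_i\to Ae_{t(p_{i+1})}\to Ap_{i+1}\to 0$, so that $\Omega(Ap_{i+1})\simeq Ap_i$; splicing these $n$ short exact sequences around the cycle produces an acyclic complex $P^\bullet$ of projectives (periodic of period $n$, up to the shifts caused by the vertices $t(p_{i+1})$) with $Ap$ appearing as a cocycle. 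To conclude that $Ap$ is Gorenstein-projective I must check that $(P^\bullet)^*={\rm Hom}_A(P^\bullet,A)$ is again acyclic. Here Lemma \ref{lem:iso} is the essential input: for a perfect pair $(p_i,p_{i+1})$ the map $\theta_{p_{i+1}}'$ is an isomorphism, so dualizing the sequence (\ref{equ:3}) — more precisely its right-module analogue (\ref{equ:2}) read through $\theta'$ — yields the corresponding short exact sequence of right modules, and splicing these shows $(P^\bullet)^*$ is acyclic. Non-projectivity of $Ap$ is immediate: $p$ is nontrivial, so $Ap\subsetneq Ae_{t(p)}$ is a proper submodule, and it cannot be a direct summand of the indecomposable projective $Ae_{t(p)}$ unless it equals it.

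Next I would treat statements (1) and (2), which characterize which cyclic modules $Ap$ are Gorenstein-projective. For (1): suppose $\theta_p$ is an isomorphism and $Ap$ is non-projective Gorenstein-projective. The key sequence is (\ref{equ:1}): $0\to\bigoplus_{q\in L(p)}Aq\to Ae_{t(p)}\to Ap\to 0$, so $\Omega(Ap)\simeq\bigoplus_{q\in L(p)}Aq$. Since $Ap$ is Gorenstein-projective, so is $\Omega(Ap)$, hence each $Aq$ with $q\in L(p)$ is Gorenstein-projective; and since $\Omega(Ap)$ is indecomposable (Lemma \ref{lem:general}(1)), $L(p)$ must be a singleton, say $L(p)=\{q\}$, with $Aq$ indecomposable non-projective Gorenstein-projective. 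This already establishes the "$L(p)=\{q\}$" half of (2) once we know (1) applies — but (1) presupposes $\theta_p$ is an isomorphism. To bridge this, I would argue that for a nontrivial nonzero path $p$ with $Ap$ non-projective Gorenstein-projective, $\theta_p$ is automatically an isomorphism: indeed Lemma \ref{lem:Hom} (or a direct argument) identifies the cokernel of $\theta_p$ with an Ext-type obstruction, and ${\rm Ext}^i_A(Ap,A)=0$ for $i\geq 1$ when $Ap$ is Gorenstein-projective, as recorded in the Frobenius-category discussion; alternatively, dualize (\ref{equ:1}) and use that ${\rm Hom}_A(-,A)$ is exact on the relevant sequence. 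Then iterate: the module $Aq$ obtained is again non-projective Gorenstein-projective, so $\theta_q$ is an isomorphism, so (1) applies to $q$ and produces a perfect pair structure; chasing around shows $q$ sits in a relation-cycle, i.e. $q$ is perfect. Uniqueness of $q$ is just that $L(p)$ is a singleton.

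Finally (3): let $M$ be indecomposable non-projective Gorenstein-projective. Then $M$ fits into $0\to M\to P\to Q$ with $P,Q$ projective (take $P\to M^{**}$, or use that $M$ embeds in its projective cover composed appropriately — concretely, $M\simeq\Omega(N)$ for some module $N$ by Lemma \ref{lem:general}(2), giving $0\to M\to P\to N$). By Lemma \ref{lem:basic}, $M\simeq\bigoplus Ap^{(\Lambda(p))}$, and indecomposability forces $M\simeq Ap$ for a single nonzero path $p$, necessarily nontrivial since $M$ is non-projective. Now apply (1)–(2): $\theta_p$ is an isomorphism, $L(p)=\{q\}$ with $q$ perfect, and from the relation-cycle for $q$ together with $\Omega(Aq_1)\simeq Ap$-type identifications one sees $p$ itself lies on the relation-cycle, hence $p$ is perfect and $M\simeq Ap$ with $p$ perfect. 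I expect the main obstacle to be the bookkeeping in establishing that $\theta_p$ is an isomorphism whenever $Ap$ is Gorenstein-projective, and then the combinatorial step showing that the chain of perfect pairs $p\mapsto q\mapsto\cdots$ generated by iterating (1) actually closes up into a cycle (rather than running forever) — this closure is where finiteness of the algebra, equivalently finiteness of the set of nonzero paths, must be invoked, together with the injectivity statement from Lemma \ref{lem:unique} to rule out spurious repetitions.
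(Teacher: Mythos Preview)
Your proof of (4) is essentially the paper's, and the closure argument via finiteness of nonzero paths together with Lemma~\ref{lem:unique} is fine. However, there is a genuine gap in your treatment of (1)--(3).

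The claim that $\theta_p$ is \emph{automatically} an isomorphism whenever $Ap$ is non-projective Gorenstein-projective is false. The paper's example immediately following this proposition is designed precisely to show this: for $p=\beta\gamma$ in the algebra described there, $Ap\simeq A\beta$ is non-projective Gorenstein-projective but $p$ is not perfect, and one checks directly that $\theta_p$ is not surjective (indeed $pA=k\beta\gamma$ is one-dimensional while $(Ap)^*\simeq{\rm Hom}_A(S_1,A)$ is two-dimensional). Your proposed justification --- that the cokernel of $\theta_p$ is an Ext-type obstruction killed by Gorenstein-projectivity --- does not hold: vanishing of ${\rm Ext}^1_A(Ap,A)$ yields exactness of the dualized sequence $\eta^*$, not surjectivity of $\theta_p$.

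The paper's route around this is the key idea you are missing. Given only that $Ap$ is non-projective Gorenstein-projective (no hypothesis on $\theta_p$), one still gets $L(p)=\{q\}$, and then a commutative diagram comparing $0\to pA\to e_{s(q)}A\to qA\to 0$ with the dual sequence $\eta^*$ shows that $\theta_q$ (not $\theta_p$!) is an isomorphism --- this uses only that $\theta_{e_{s(q)}}$ is an isomorphism and that ${\rm inc}^*$ is surjective. The hypothesis $\theta_p$ iso in (1) enters separately: it forces the top row of that diagram to be short exact, whence $R(q)=\{p\}$ and $(q,p)$ is a perfect pair. Statement (2) is then proved by running this diagram argument (no $\theta_p$ hypothesis needed) to obtain $\theta_q$ iso, and then applying (1) to $q$.

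This propagates to your (3): having written $M\simeq Ap$ via Lemma~\ref{lem:basic}, you cannot conclude $p$ itself is perfect (same counterexample: $A\beta\gamma$ is Gorenstein-projective but $\beta\gamma$ is not perfect). The paper instead invokes Lemma~\ref{lem:general}(2) to write $M\simeq\Omega(N)$ first, takes $N\simeq Ap$, and concludes $M\simeq\Omega(Ap)\simeq Aq$ with $q$ perfect by (2). The perfect path representing $M$ is the one indexing the syzygy, not $p$ itself.
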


\begin{proof}
We  observe that each indecomposable non-projective  Gorenstein-projective  $A$-module $X$ is of the form $A\gamma$ for some nonzero nontrivial path $\gamma$. Indeed, there exists an exact sequence $0\rightarrow X\rightarrow P\rightarrow Q$ of $A$-modules with $P, Q$ projective. Then we are done by Lemma \ref{lem:basic}. In particular, this observation implies that a monomial algebra $A$ is CM-finite.
\vskip 3pt

(1)   By (\ref{equ:1}) we have $\Omega(Ap)=\bigoplus_{q\in L(p)}Aq$, which is indecomposable and non-projective by Lemma \ref{lem:general}(1). We infer that $L(p)=\{q\}$ for some nonzero nontrivial path $q$. Consider the exact sequence (\ref{equ:1}) for $p$
$$\eta\colon 0\longrightarrow Aq \stackrel{\rm inc}\longrightarrow Ae_{t(p)}\stackrel{\pi_p}\longrightarrow Ap\longrightarrow 0.$$
Recall that the $A$-module $Ap$ is Gorenstein-projective, in particular, ${\rm Ext}_A^1(Ap, A)=0$. Therefore, the lower row of the following commutative diagram is exact.
\[\xymatrix{
\epsilon\colon & 0\ar[r] & pA \ar[d]_{\theta_p}\ar[r]^-{\rm inc} & e_{s(q)}A \ar[d]_{\theta_{e_{s(q)}}} \ar[r]^-{\pi'_q}& qA \ar[d]^-{\theta_q}\ar[r] & 0\\
\eta^*\colon & 0 \ar[r] & (Ap)^* \ar[r]^-{(\pi_p)^*} & (Ae_{t(p)})^* \ar[r]^-{{\rm inc}^*} & (Aq)^*\ar[r] & 0
}\]
Recall that $\theta_{e_{s(q)}}$ is an isomorphism and $\theta_q$ is a monomorphism. However, since ${\rm inc}^*$ is epic, we infer that $\theta_q$ is also epic and thus an isomorphism. We note that the $A$-module $Aq$ is also non-projective Gorenstein-projective; see Lemma \ref{lem:general}(1).

  We claim that $(q, p)$ is a perfect pair. Indeed, we already have (P1) and (P2). It suffices to show that $R(q)=\{p\}$. By assumption, the map $\theta_p$ is an isomorphism. Then the upper sequence $\epsilon$ in the above diagram is also exact. Comparing $\epsilon$ with (\ref{equ:2}), we obtain that $R(q)=\{p\}$.

We have obtained a perfect pair $(q, p)$ and also proved that $\theta_q$ is an isomorphism. We mention that $\Omega(Ap)\simeq Aq$. Set $q_0=p$ and $q_1=q$. We now replace $p$ by $q$ and continue the above argument. Thus we obtain perfect pairs $(q_{m+1}, q_m)$ for all $m\geq 0$ satisfying $\Omega(Aq_m)\simeq Aq_{m+1}$. By Lemma \ref{lem:general}(3), we have  for a sufficiently large $m$, an isomorphism $Aq_m\simeq Aq_0=Ap$. By Lemma \ref{lem:unique} we have $q_m=p$. Thus we have a required sequence $p=q_m, q_{m-1}, \cdots, q_1, q_0=p$, proving that $p$ is perfect.

\vskip 3pt

(2) By the first paragraph in the proof of (1), we obtain that $L(p)=\{q\}$ and that $\theta_q$ is an isomorphism. Since $Aq\simeq \Omega(Ap)$, we infer that $Aq$ is non-projective Gorenstein-projective. Then the path $q$ is perfect by (1), proving (2).

\vskip 3pt

(3) By Lemma \ref{lem:general}(2), there is an indecomposable non-projective Gorenstein-projective $A$-module $N$ such that $M\simeq \Omega(N)$. By the observation above, we may assume that $N=Ap$ for a nonzero nontrivial path $p$. Recall from  (\ref{equ:1}) that $\Omega(N)\simeq \bigoplus_{p'\in L(p)}Ap'$. Then we have $L(p)=\{q\}$ for some nonzero path $q$ and an isomorphism $M\simeq Aq$. The path $q$ is necessarily perfect by (2).

\vskip 3pt

(4)  We take a relation-cycle $p=p_1, p_2, \cdots, p_n, p_{n+1}=p$ for the perfect path $p$. We define $p_{m}=p_j$ if $m=an+j$ for some integer $a$ and $1\leq j\leq n$. Then each pair $(p_m, p_{m+1})$ is perfect. By (\ref{equ:3}) we have an exact sequence of left $A$-modules
$$\eta_m\colon 0\longrightarrow Ap_m\stackrel{\rm inc}\longrightarrow Ae_{t(p_{m+1})}\stackrel{\pi_{p_{m+1}}}\longrightarrow Ap_{m+1}\longrightarrow 0.$$
Gluing all these $\eta_m$'s together,  we obtain an acyclic complex $P^\bullet=\cdots \rightarrow Ae_{t(p_{m})}\rightarrow Ae_{t(p_{m+1})}\rightarrow \cdots$ such that $Ap$ is isomorphic to one of the cocycles. We observe that $Ap=Ap_1$ is non-projective, since $\eta_1$ does not split.

 It remains to prove that the Hom-complex $(P^\bullet)^*={\rm Hom}_A(P^\bullet, A)$ is also acyclic. For this, it suffices to show that for each $m$, the sequence ${\rm Hom}_A(\eta_m, A)$ is exact, or equivalently, the morphism ${\rm inc}^*={\rm Hom}_A({\rm inc}, A)$ is epic. We observe the following commutative diagram
 \[\xymatrix{
  e_{s(p_m)}A\ar[d]_{\theta_{e_{s(p_m)}}} \ar[r]^-{\pi'_{p_m}} & p_m A\ar[d]^{\theta_{p_m}}  \\
 (Ae_{t(p_{m+1})})^* \ar[r]^-{{\rm inc}^*}& (Ap_m)^*,
}\]
where we use the notation in (\ref{equ:2}). Recall that $\theta_{e_{s(p_m)}}$ is an isomorphism. By Lemma \ref{lem:iso} the morphism $\theta_{p_m}$ is an isomorphism. Since $\pi'_{p_m}$ is a projective cover, we infer that  the morphism ${\rm inc}^*$ is epic. We are done with the whole proof.
\end{proof}

The following example shows that the condition that $\theta_p$ is an isomorphism is necessary in the proof of Proposition \ref{prop:bijection}(1).

\begin{exm}
{\rm Let $Q$ be the following quiver.
\[\xymatrix{
1 \ar @/^/[r]^-\alpha & 2\ar @/^/[l]^-\beta & \ar[l]_{\gamma} 3}\]
Let $I$ be the ideal generated by $\beta\alpha$ and $\alpha\beta$, and let $A=kQ/I$. We denote by $S_i$ the simple $A$-module corresponding to the vertex $i$ for $1\leq i\leq 3$.

The corresponding set $\mathbf{F}$ of minimal paths contained in $I$ equals $\{\beta\alpha, \alpha\beta\}$. Then we observe that there exist precisely two perfect pairs, which are $(\beta, \alpha)$ and $(\alpha, \beta)$. Hence, the set of perfect paths equals $\{\alpha, \beta\}$. Then by Theorem \ref{thm:main}, up to isomorphism,  all the indecomposable non-projective Gorenstein-projective $A$-modules are given by $A\alpha$ and $A\beta$. We observe two isomorphisms $A\alpha\simeq S_2$ and $A\beta\simeq S_1$.

Consider the nonzero path $p=\beta\gamma$. Then there is an isomorphism $Ap\simeq A\beta$ of left  $A$-modules. In particular, the $A$-module $Ap$ is  non-projective Gorenstein-projective. However, the path $p$ is not perfect.}
\end{exm}

The following result answers the question asked in the introduction: for which nonzero path $p$ in $A$, the $A$-module $Ap$ is Gorenstein-projective. We observe by (\ref{equ:1}) that $Ap$ is projective if and only if $p$ is trivial or $L(p)$ is empty.

\begin{prop}\label{prop:answer}
Let $A$ be a monomial algebra and let $p$ be a nonzero nontrivial path. Then $Ap$ is non-projective Gorenstein-projective if and only if  $L(p)=\{q\}$ for a perfect path $q$.
\end{prop}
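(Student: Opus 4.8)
The plan is to deduce Proposition \ref{prop:answer} from Proposition \ref{prop:bijection}, which has just been established, together with Lemma \ref{lem:trivial}. The statement is almost a direct corollary, so the proof should be short; the main work is bookkeeping about which implications come from where.

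For the ``only if'' direction, suppose $Ap$ is non-projective Gorenstein-projective. By Lemma \ref{lem:trivial}, $\Omega(Ap)\simeq \bigoplus_{q'\in L(p)}Aq'$, and by Lemma \ref{lem:general}(1) this syzygy is indecomposable and non-projective. Hence $L(p)$ is a singleton, say $L(p)=\{q\}$, with $q$ nonzero and nontrivial (it is nontrivial because $Aq\simeq\Omega(Ap)$ is non-projective, whereas $Ae_i$ is projective). This is exactly Proposition \ref{prop:bijection}(2), which moreover asserts that this unique $q$ is perfect. So the only-if part is immediate from part (2) of the proposition.

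For the ``if'' direction, suppose $L(p)=\{q\}$ with $q$ a perfect path. By Proposition \ref{prop:bijection}(4), $Aq$ is non-projective Gorenstein-projective. Now the exact sequence (\ref{equ:1}) for $p$ reads $0\to Aq\to Ae_{t(p)}\to Ap\to 0$, exhibiting $Ap$ as the cokernel of a monomorphism from a Gorenstein-projective module into a projective module; equivalently $\Omega(Ap)\simeq Aq$. Since $Aq$ is non-projective, $Ap$ cannot be projective (if it were, the sequence would split, forcing $Aq$ to be a summand of the projective module $Ae_{t(p)}$, hence projective). Finally, to see $Ap$ is Gorenstein-projective: the category $A\mbox{-Gproj}$ is closed under extensions (equivalently, under cokernels of monomorphisms between Gorenstein-projectives) and $Ae_{t(p)}$ is projective, hence Gorenstein-projective. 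Alternatively, and perhaps more cleanly, one invokes Theorem \ref{thm:main}: since $q$ is perfect, $Aq\in{\rm ind}\,A\mbox{-\underline{Gproj}}$, and $Ap$, being the $\Omega^{-1}$ of $Aq$ in the triangulated category $A\mbox{-\underline{Gproj}}$ up to projective summands, is again Gorenstein-projective; but since here we have an actual exact sequence $0\to Aq\to Ae_{t(p)}\to Ap\to 0$ with the middle term projective, $Ap$ is literally a cosyzygy of $Aq$ in $A\mbox{-Gproj}$ and so lies in $A\mbox{-Gproj}$.

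I do not anticipate a genuine obstacle here; the only mild subtlety is making sure the perfect path $q$ produced in the ``only if'' direction is the same object appearing in Proposition \ref{prop:bijection}(2) (it is, by construction, since $L(p)=\{q\}$ determines it uniquely), and checking the non-projectivity claim carefully in both directions. One should also remark, as the paragraph preceding the proposition already does, that the excluded cases ($p$ trivial, or $L(p)=\emptyset$) are precisely the ones where $Ap$ is projective, so the dichotomy in the statement is exhaustive.
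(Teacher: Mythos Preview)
Your ``only if'' direction is fine and coincides with the paper's: it is exactly Proposition~\ref{prop:bijection}(2).

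Your ``if'' direction, however, contains a genuine error. You assert that $A\mbox{-Gproj}$ is closed under extensions ``(equivalently, under cokernels of monomorphisms between Gorenstein-projectives)''. These are \emph{not} equivalent: closure under extensions says that in a short exact sequence $0\to X\to Y\to Z\to 0$ with $X,Z$ Gorenstein-projective, the middle term $Y$ is too; it says nothing about $Z$ when $X,Y$ are Gorenstein-projective. In fact the latter closure fails even when $Y$ is projective. In the algebra of the paper's own example after Proposition~\ref{prop:bijection} (the quiver $1\rightleftarrows 2\leftarrow 3$ with relations $\beta\alpha,\alpha\beta$), one has $S_1\simeq A\beta$ Gorenstein-projective and an inclusion $S_1\simeq k\beta\gamma\hookrightarrow Ae_3$, but the cokernel $Ae_3/k\beta\gamma$ is a two-dimensional indecomposable module with top $S_3$, which is neither projective nor isomorphic to $S_1$ or $S_2$, hence not Gorenstein-projective. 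Your alternative argument via $\Omega^{-1}$ is circular: calling $Ap$ ``a cosyzygy of $Aq$ in $A\mbox{-Gproj}$'' presupposes that the sequence $0\to Aq\to Ae_{t(p)}\to Ap\to 0$ lies in $A\mbox{-Gproj}$, i.e.\ that $Ap$ is already Gorenstein-projective.

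The paper's fix is short but uses the perfectness of $q$ in an essential way. Since $q$ is perfect, there is a perfect path $p'$ with $(q,p')$ a perfect pair; then $L(p')=\{q\}$ by (P3), so $t(p')=s(q)=t(p)$, and the two sequences~(\ref{equ:1}) for $p$ and for $p'$ share the \emph{same} inclusion $Aq\hookrightarrow Ae_{t(p)}$. Hence $Ap\simeq Ap'$, and $Ap'$ is non-projective Gorenstein-projective by Proposition~\ref{prop:bijection}(4). The point is that one does not show $Ap$ is Gorenstein-projective abstractly from the short exact sequence; one identifies it with a module already known to be Gorenstein-projective.
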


\begin{proof}
The ``only if" part is due to Proposition \ref{prop:bijection}(2). Conversely, let $(q, p')$ be a perfect pair with $p'$ perfect. In particular, $L(p')=\{q\}$ and thus $t(p')=t(p)$. By comparing the exact sequences (\ref{equ:1}) for $p$ and $p'$, we obtain an isomorphism $Ap\simeq Ap'$. Then we are done, since $Ap'$ is non-projective Gorenstein-projective by Proposition \ref{prop:bijection}(4).
\end{proof}

The following example shows that a connected truncated quiver algebra  is either selfinjective or CM-free,  provided that the underlying quiver has no sources or sinks; compare \cite[Theorem 1.1]{Chen2012}.

\begin{exm}\label{exm:truncated2}
{\rm Let $A=kQ/J^d$ be the truncated quiver algebra in Example \ref{exm:truncated} such that $Q$ is connected without sources or sinks. If $Q$ is not a basic cycle, then there is no perfect path. Then by Theorem \ref{thm:main}, the algebra $A$ is CM-free. On the other hand, if $Q$ is a basic cycle, then the algebra $A$ is well known to be selfinjective}.
\end{exm}

\section{The Gorenstein-projective modules over a quadratical monomial algebra}

In this section, we specialize Theorem \ref{thm:main} to a quadratic monomial algebra. We describe explicitly the stable category of Gorenstein-projective modules over a quadratic monomial algebra, which turns out to be a semisimple triangulated category. We characterize for a monomial algebra when its stable category of Gorenstein-projective module is semisimple.

Let $A=kQ/I$ be a monomial algebra. We say that the algebra $A$ is \emph{quadratic monomial} provided that the ideal $I$ is generated by paths of length two, or equivalently, the corresponding set $\mathbf{F}$ consists of certain paths of length two. By Lemma \ref{lem:char}(${\rm P'1}$), for a perfect pair $(p, q)$ in $A$, both $p$ and $q$ are necessarily arrows. In particular, a perfect path is an arrow and its relation-cycle consists entirely of arrows.

Hence, we have the following immediate consequence of Theorem \ref{thm:main}.

\begin{prop}\label{prop:tuilun}
Let $A$ be a quadratic  monomial algebra. Then there is a bijection
$$\{\mbox{perfect arrows in }A\}\stackrel{1:1} \longleftrightarrow {\rm ind}\; A\mbox{-\underline{\rm Gproj}}$$
sending a perfect arrow $\alpha$ to the $A$-module $A\alpha$. \hfill $\square$
\end{prop}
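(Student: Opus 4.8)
The plan is to deduce Proposition~\ref{prop:tuilun} as a direct specialization of Theorem~\ref{thm:main}, the only additional input being that in a quadratic monomial algebra every perfect path is an arrow. First I would recall that $A$ being quadratic monomial means the set $\mathbf{F}$ of minimal paths in $I$ consists entirely of paths of length two. Then, given any perfect pair $(p,q)$ in $A$, condition $({\rm P'1})$ of Lemma~\ref{lem:char} says that the concatenation $pq$ lies in $\mathbf{F}$, hence $pq$ has length two; since by $({\rm P1})$ both $p$ and $q$ are nontrivial, each has length at least one, and therefore each has length exactly one, i.e.\ both $p$ and $q$ are arrows.

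Next I would unwind the definition of a perfect path (Definition~\ref{defn:perfectpaths}): a perfect path $p$ admits a sequence $p=p_1,p_2,\ldots,p_n,p_{n+1}=p$ with each $(p_i,p_{i+1})$ a perfect pair. By the previous paragraph, every $p_i$ occurring in such a sequence is an arrow; in particular $p$ itself is an arrow, and its relation-cycle consists entirely of arrows. Consequently the set of perfect paths in $A$ coincides with the set of perfect arrows in $A$.

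Finally I would invoke Theorem~\ref{thm:main}, which gives a bijection between the set of perfect paths in $A$ and ${\rm ind}\;A\mbox{-\underline{\rm Gproj}}$ sending a perfect path $p$ to $Ap$. Substituting ``perfect path'' by ``perfect arrow'' using the identification just established yields the asserted bijection $\{\mbox{perfect arrows in }A\}\stackrel{1:1}\longleftrightarrow {\rm ind}\;A\mbox{-\underline{\rm Gproj}}$, $\alpha\mapsto A\alpha$. There is essentially no obstacle here: the statement is a corollary, and the only point requiring the quadratic hypothesis is the length count via Lemma~\ref{lem:char}$({\rm P'1})$; everything else is a matter of citing Theorem~\ref{thm:main} verbatim.
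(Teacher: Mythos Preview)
Your proof is correct and follows essentially the same approach as the paper: the paragraph preceding the proposition in the paper makes exactly the argument you give, namely that by Lemma~\ref{lem:char}$({\rm P'1})$ any perfect pair in a quadratic monomial algebra consists of arrows, so perfect paths coincide with perfect arrows, and the result then follows immediately from Theorem~\ref{thm:main}.
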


 We will give a more convenient characterization of a perfect arrow. For this end, we introduce the following notion.

\begin{defn}\label{defn:re-quiver}
Let  $A=kQ/I$ be a quadratic monomial algebra. We define its \emph{relation quiver} $\mathcal{R}_A$ as follows: the vertices are given by the arrows in $Q$, and there is an arrow $[\beta\alpha]\colon \alpha\rightarrow \beta$ if $t(\alpha)=s(\beta)$ and $\beta\alpha$ lies in $I$, or equivalently in $\mathbf{F}$.

Let $\mathcal{C}$ be a connected component of $\mathcal{R}_A$. We call $\mathcal{C}$ a \emph{perfect component} (\emph{resp.} an \emph{acyclic component}) if $\mathcal{C}$ is a basic cycle (\emph{resp.} contains no oriented cycles).\hfill $\square$
\end{defn}

We mention that the relation quiver is somehow dual to the Ufnarovskii graph studied in \cite{HS}.

The following lemma implies that an arrow in $Q$ is perfect if and only if the corresponding vertex in the relation quiver of $A$ belongs to a perfect component.

\begin{lem}\label{lem:re-quiver}
Let  $A=kQ/I$ be a quadratic monomial algebra, and let $\alpha$ be an arrow. Then the following statements hold.
\begin{enumerate}
\item We have $L(\alpha)=\{\beta\in Q_1\; |\; s(\beta)=t(\alpha) \mbox{ and } \beta\alpha\in \mathbf{F}\}$, and $R(\alpha)=\{\beta\in Q_1\; |\; t(\beta)=s(\alpha) \mbox{ and }\alpha\beta\in \mathbf{F}\}$.
\item Assume that $\beta$ is an arrow with $t(\beta)=s(\alpha)$. Then the pair $(\alpha, \beta)$ is perfect if and only if there is an arrow $[\alpha\beta]$ from $\beta$ to $\alpha$ in $\mathcal{R}_A$, which is the unique arrow starting at $\beta$ and also the unique arrow terminating at $\alpha$.
    \item  The arrow $\alpha$ is perfect if and only if the corresponding vertex belongs a perfect component of $\mathcal{R}_A$.
    \end{enumerate}
\end{lem}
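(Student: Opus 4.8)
The plan is to prove the three statements in order, since each feeds into the next. Statement (1) is essentially a translation of the definitions of $L(\alpha)$ and $R(\alpha)$ into the quadratic setting. Recall $L(\alpha)$ consists of the right-minimal paths among the nonzero paths $q$ with $s(q) = t(\alpha)$ and $q\alpha = 0$ in $A$. Since $I$ is generated by the length-two paths in $\mathbf{F}$, a nonzero path $q$ (necessarily nontrivial, as $e_{t(\alpha)}\alpha = \alpha \neq 0$) satisfies $q\alpha = 0$ precisely when $q\alpha$ contains a sub-path in $\mathbf{F}$; because $\alpha$ is an arrow and $q$ is nonzero, the only candidate for such a sub-path is $\beta\alpha$ where $\beta$ is the first arrow of $q$, so $\beta\alpha \in \mathbf{F}$. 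Then the right-minimal such $q$ are exactly the arrows $\beta$ with $\beta\alpha \in \mathbf{F}$. The description of $R(\alpha)$ is dual. This step is routine.

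For statement (2), I would use Lemma \ref{lem:char}. Let $\beta$ be an arrow with $t(\beta) = s(\alpha)$. By Lemma \ref{lem:char}, $(\alpha,\beta)$ is perfect iff (P'1) $\alpha\beta \in \mathbf{F}$, (P'2) holds, and (P'3) holds. Condition (P'1) says exactly that there is an arrow $[\alpha\beta]\colon \beta \to \alpha$ in $\mathcal{R}_A$. Given (P'1), condition (P'2) — every nonzero $q'$ with $t(q') = s(\alpha)$ and $\alpha q' = \gamma\delta$ for some $\delta \in \mathbf{F}$ must factor as $q' = \beta x$ — unwinds, using that $\alpha$ is an arrow and $\mathbf{F}$ consists of length-two paths, to the statement that the only arrow $\beta'$ with $\alpha\beta' \in \mathbf{F}$ is $\beta$ itself; equivalently $R(\alpha) = \{\beta\}$ by part (1), equivalently $[\alpha\beta]$ is the unique arrow terminating at $\alpha$ in $\mathcal{R}_A$. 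Dually, (P'3) translates to $L(\beta) = \{\alpha\}$, i.e. $[\alpha\beta]$ is the unique arrow starting at $\beta$. Assembling these three equivalences gives (2). The only mild subtlety is checking that in (P'2) no longer relation can be "hidden" — but since every $\delta \in \mathbf{F}$ has length two and $\alpha q'$ begins with the arrow $\alpha$, the sub-path $\delta$ that is created must be $\alpha\beta'$ with $\beta'$ the first arrow of $q'$, so nothing is lost.

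Statement (3) is where the real content lies, and I expect it to be the main obstacle — though it is more bookkeeping than difficulty. Recall from Definition \ref{defn:perfectpaths} that $\alpha$ is perfect iff there is a sequence $\alpha = p_1, p_2, \dots, p_n, p_{n+1} = \alpha$ of nonzero paths (here, by the quadratic hypothesis, arrows) with each $(p_i, p_{i+1})$ a perfect pair. For the "if" direction: suppose $\alpha$ lies in a connected component $\mathcal{C}$ of $\mathcal{R}_A$ that is a basic cycle, say with vertices $\alpha = \gamma_1, \gamma_2, \dots, \gamma_m$ and arrows $\gamma_i \to \gamma_{i+1}$ (indices mod $m$). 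Each such arrow of $\mathcal{R}_A$ is, by the basic-cycle property, the unique arrow out of $\gamma_i$ and the unique arrow into $\gamma_{i+1}$; by part (2) this says exactly that $(\gamma_{i+1}, \gamma_i)$ is a perfect pair. Reading the cycle backwards then exhibits a relation-cycle for $\alpha$, so $\alpha$ is perfect. For the "only if" direction: if $\alpha$ is perfect with perfect pairs $(p_i, p_{i+1})$, then by part (2) each $p_{i+1}$ is the unique arrow out of $p_i$ in $\mathcal{R}_A$ and $p_i$ is the unique arrow into $p_{i+1}$ — wait, more precisely there is an arrow $[p_i p_{i+1}]\colon p_{i+1}\to p_i$, unique out of $p_{i+1}$ and unique into $p_i$. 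So walking $\alpha = p_1 \leftarrow p_2 \leftarrow \cdots \leftarrow p_{n+1} = \alpha$ along these arrows, every vertex encountered has a unique incoming and a unique outgoing arrow in $\mathcal{R}_A$; since this walk closes up and the component is connected, a standard argument shows the whole component $\mathcal{C}$ consists of exactly these vertices and arrows, hence is a basic cycle. The point to be careful about is that "unique in, unique out at each vertex of the component" plus connectedness plus finiteness forces the component to be a single basic cycle — this is elementary graph theory, but worth stating cleanly, e.g. by noting the component is a finite connected graph in which every vertex has in-degree and out-degree one, and such a graph is necessarily a disjoint union of basic cycles, hence (being connected) a single basic cycle.
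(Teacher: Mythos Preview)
Your proposal is correct and follows essentially the same approach as the paper's proof. The only minor differences are that for (2) you route through Lemma~\ref{lem:char} rather than directly invoking Definition~\ref{defn:perfectpair} (the paper simply observes that (P2) and (P3) say $R(\alpha)=\{\beta\}$ and $L(\beta)=\{\alpha\}$, which by (1) are the predecessor and successor conditions in $\mathcal{R}_A$), and for (3) you spell out the graph-theoretic argument that the paper leaves as ``an immediate consequence of (2)''.
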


\begin{proof}
For (1), we observe the following fact: for a nonzero path $p$ with $s(p)=t(\alpha)$, then $p\alpha=0$ if and only if $p=p'\beta$ with $\beta\alpha\in \mathbf{F}$. This fact implies the equation on $L(\alpha)$. Similarly, we have the equation on $R(\alpha)$.

We mention that by (1), the set $L(\alpha)$ consists of all immediate successors of $\alpha$ in $\mathcal{R}_A$, and $R(\alpha)$ consists of all immediate predecessors of $\alpha$. Then (2) follows immediately from the definition of a perfect pair. The statement (3) is an immediate consequence of (2).
\end{proof}

The following result concerns the homological property of the module $A\alpha$. We call a vertex $j$ in a quiver \emph{bounded}, if the lengths of all the paths starting at $j$ are uniformly bounded. In this case, the length is strictly less than the number of vertices in the quiver.

\begin{lem}\label{lem:homological}
Let  $A=kQ/I$ be a quadratic monomial algebra, and let $\alpha$ be an arrow. Then the following statements hold.
\begin{enumerate}
\item The $A$-module $A\alpha$ is non-projective Gorenstein-projective if and only if the corresponding vertex of $\alpha$  belongs a perfect component of $\mathcal{R}_A$.
\item The $A$-module $A\alpha$ has finite projective dimension if and only if $\alpha$ is a bounded vertex in  $\mathcal{R}_A$.
\item If the corresponding vertex of $\alpha$ in $\mathcal{R}_A$ is not bounded and does not belong to a perfect component, then each syzygy module $\Omega^d(A\alpha)$ is not Gorenstein-projective.
    \end{enumerate}
\end{lem}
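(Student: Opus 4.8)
The plan is to analyze the syzygies of $A\alpha$ by tracking the behaviour of the vertex $\alpha$ under the successor relation in the relation quiver $\mathcal{R}_A$, using the exact sequence (\ref{equ:1}), which in the quadratic case reads $0\to\bigoplus_{\beta\in L(\alpha)}A\beta\to Ae_{t(\alpha)}\to A\alpha\to 0$ with $L(\alpha)$ the set of immediate successors of $\alpha$ in $\mathcal{R}_A$ by Lemma \ref{lem:re-quiver}(1). Thus $\Omega(A\alpha)\simeq\bigoplus_{\beta\in L(\alpha)}A\beta$, and more generally $\Omega^d(A\alpha)$ is the direct sum of $A\gamma$ over all vertices $\gamma$ reachable from $\alpha$ by a path of length exactly $d$ in $\mathcal{R}_A$ (with multiplicities). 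For (2), note $A\alpha$ has finite projective dimension iff for some $d$ every length-$d$ path from $\alpha$ in $\mathcal{R}_A$ has died, i.e. there is no infinite path starting at $\alpha$; since $\mathcal{R}_A$ is a finite quiver, this is equivalent to $\alpha$ being a bounded vertex, and then the bound on the length is at most the number of vertices minus one. For (1), combine Proposition \ref{prop:bijection}(4) and Proposition \ref{prop:bijection}(1)–(2): in the quadratic case $\theta_\alpha$ is automatically an isomorphism for an arrow lying on a relation (one can also invoke Lemma \ref{lem:re-quiver}(3) together with Proposition \ref{prop:tuilun}), so $A\alpha$ is non-projective Gorenstein-projective iff $\alpha$ is a perfect arrow iff $\alpha$ lies in a perfect component.

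For (3), the hypothesis is that $\alpha$ is not bounded and not in a perfect component. First I would reduce to showing that no $A\gamma$ with $\gamma$ reachable from $\alpha$ is Gorenstein-projective: indeed $\Omega^d(A\alpha)=\bigoplus A\gamma$ over reachable $\gamma$, and since $A\mbox{-Gproj}$ is closed under direct summands, if $\Omega^d(A\alpha)$ were Gorenstein-projective then each such $A\gamma$ would be, and in particular $A\gamma$ would be either projective or non-projective Gorenstein-projective. So it suffices to show: if $\gamma$ is reachable from $\alpha$ in $\mathcal{R}_A$, then $A\gamma$ is neither of finite projective dimension nor non-projective Gorenstein-projective. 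The second alternative is excluded by part (1) once we know no reachable vertex lies in a perfect component — and this is where the structure of $\mathcal{R}_A$ enters: a perfect component is a basic cycle, which in particular has the property that it is a \emph{sink} component in the sense that every arrow out of a vertex of a basic cycle stays inside the cycle; hence if some reachable $\gamma$ were in a perfect component, then (following the unique outgoing arrows) $\alpha$ itself would feed into that cycle, but a basic cycle has exactly one arrow into each of its vertices, coming from inside, contradicting reachability from outside — so no reachable vertex is in a perfect component. The first alternative, finite projective dimension of $A\gamma$, is excluded by part (2): since $\alpha$ is unbounded there is an infinite path from $\alpha$ in $\mathcal{R}_A$; I would argue that this infinite path must pass through infinitely many vertices, hence eventually enters a strongly connected subset, equivalently there is an infinite path starting at some reachable $\gamma$ that is not bounded — more carefully, unboundedness of $\alpha$ gives an infinite walk, and every vertex on it is unbounded, so picking any reachable $\gamma$ on a suitable tail we get $\gamma$ unbounded, hence $A\gamma$ of infinite projective dimension by (2). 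Then $A\gamma$ is neither projective-dimension-finite nor Gorenstein-projective, as required.

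The subtle point I would need to pin down carefully is the dichotomy applied to the summands $A\gamma$: a priori an indecomposable module can fail to be Gorenstein-projective yet still be a summand of some $\Omega^d(A\alpha)$ only if that whole syzygy is non-Gorenstein-projective, so I must be sure that \emph{every} $A\gamma$ appearing is non-Gorenstein-projective, not merely one of them. This forces me to show that \emph{no} vertex reachable from $\alpha$ (including $\alpha$ itself) lies in a perfect component and that at least one reachable vertex is unbounded while the others are at worst also non-Gorenstein-projective — but in fact the cleanest route is: (i) no reachable vertex lies in a perfect component (by the sink-component property of basic cycles sketched above), so by (1) no $A\gamma$ is non-projective Gorenstein-projective; and (ii) each $\Omega^d(A\alpha)$ is non-zero and non-projective, because $\alpha$ unbounded means length-$d$ paths from $\alpha$ never all vanish, so some $A\gamma$ is non-projective; since a non-projective module all of whose indecomposable summands fail to be non-projective-Gorenstein-projective cannot be Gorenstein-projective (as Gorenstein-projective non-projective modules decompose into non-projective Gorenstein-projective indecomposables), we conclude $\Omega^d(A\alpha)$ is not Gorenstein-projective. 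I expect assembling this last implication — ruling out perfect components among reachable vertices via the basic-cycle structure — to be the main obstacle; the rest is bookkeeping with (\ref{equ:1}) and parts (1)–(2).
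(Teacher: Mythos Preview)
Your proposal is correct and follows essentially the same route as the paper: the syzygy formula $\Omega(A\alpha)\simeq\bigoplus_{\beta\in L(\alpha)}A\beta$ from (\ref{equ:1}) and Lemma~\ref{lem:re-quiver}(1), the boundedness criterion for (2), and for (3) a contradiction via a non-projective Gorenstein-projective summand $A\beta$ of $\Omega^d(A\alpha)$ forcing $\beta$ (hence $\alpha$) into a perfect component.

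Two places are over-engineered. For (1), the claim that $\theta_\alpha$ is automatically an isomorphism is not needed and not obviously true in general; the paper simply invokes Proposition~\ref{prop:bijection}(2) to get $L(\alpha)=\{\beta\}$ with $\beta$ perfect, and then observes that the arrow $\alpha\to\beta$ in $\mathcal{R}_A$ places $\alpha$ in the same connected component as $\beta$. For (3), your ``sink-component'' discussion about unique incoming arrows in a basic cycle is unnecessary: a perfect component is by definition a \emph{connected component} of $\mathcal{R}_A$, so any path in $\mathcal{R}_A$ ending in it must lie entirely inside it---that is all one needs to conclude $\alpha\in\mathcal{C}$ from the existence of a path $\alpha\to\cdots\to\beta$ with $\beta\in\mathcal{C}$. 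With these simplifications your argument coincides with the paper's.
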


\begin{proof}
The ``if" part of (1) follows from Lemma \ref{lem:re-quiver}(3) and Proposition \ref{prop:bijection}(4). For the ``only if" part, assume that the  $A$-module $A\alpha$ is non-projective Gorenstein-projective. By Proposition \ref{prop:bijection}(2) there is a perfect arrow $\beta$ such that $L(\alpha)=\{\beta\}$. In particular, there is an arrow from $\alpha$ to $\beta$ in $\mathcal{R}_A$. By Lemma \ref{lem:re-quiver}(3) $\beta$ belongs to a perfect component $\mathcal{C}$ of $\mathcal{R}_A$. It follows that $\alpha$ also belongs to $\mathcal{C}$.

For (2), we observe that by Lemma \ref{lem:re-quiver}(1) and (\ref{equ:1}) there is an isomorphism
\begin{align}\label{equ:syzygy}
\Omega(A\alpha)\stackrel{\sim}\longrightarrow \bigoplus A\beta,
\end{align}
where $\beta$ runs over all the immediate successors of $\alpha$ in $\mathcal{R}_A$. Then (2) follows immediately.

For (3), we assume on the contrary that $\Omega^d(A\alpha)$ is Gorenstein-projective for some $d\geq 1$. We know already by (2) that $\Omega^d(A\alpha)$ is not projective. By iterating the formula  (\ref{equ:syzygy}), we obtain an arrow $\beta$ such that $A\beta$ is non-projective Gorenstein-projective and that there is a path from $\alpha$ to $\beta$ of length $d$ in $\mathcal{R}_A$. However, by (1) $\beta$ belongs to a perfect component $\mathcal{C}$. It follows that $\alpha$ also belongs to $\mathcal{C}$, which is a desired contradiction.
\end{proof}

The following result studies the Gorenstein homological properties of a quadratic monomial algebra. In particular, we obtain a characterization of a quadratic monomial algebra being Gorenstein, which contains the well-known result that a gentle algebra is Gorenstein; see \cite[Theorem 3.4]{GR} and compare Example \ref{exm:quadratic}.

\begin{prop}\label{prop:charac}
Let  $A=kQ/I$ be a quadratic monomial algebra. Denote by $d$ the length of the longest paths in acyclic components of $\mathcal{R}_A$. Then the following statements hold.
\begin{enumerate}
\item The algebra $A$ is Gorenstein if and only if any connected component of its relation quiver $\mathcal{R}_A$ is either perfect or acyclic. In this case, the algebra $A$ is $(d+2)$-Gorenstein.
    \item The algebra $A$ is CM-free if and only if the relation quiver $\mathcal{R}_A$ contains no perfect component.
    \item The algebra $A$ has finite global dimension if and only if any component of the relation quiver $\mathcal{R}_A$ is acyclic.
    \end{enumerate}
\end{prop}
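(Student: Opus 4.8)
The plan is to use the three previous lemmas — Lemma \ref{lem:re-quiver}, Lemma \ref{lem:homological}, and the syzygy formula (\ref{equ:syzygy}) — together with Lemma \ref{lem:general} and the general facts about Gorenstein algebras (Lemmas \ref{lem:Gorenstein} and \ref{lem:Hoshi}... i.e.\ \ref{lem:Gorenstein}), reducing everything to a combinatorial analysis of the relation quiver $\mathcal{R}_A$. The key structural point, used repeatedly, is that by (\ref{equ:syzygy}) the syzygy of $A\alpha$ is controlled entirely by the immediate successors of $\alpha$ in $\mathcal{R}_A$, so that the Gorenstein-homological behaviour of the simple-like modules $A\alpha$ is a purely graph-theoretic feature of $\mathcal{R}_A$. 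I would also record at the outset the elementary observation that the projective $A$-module $Ae_i$ has radical $\bigoplus_{\alpha: s(\alpha)=i} A\alpha$, so that $\Omega(S_i) = \bigoplus_{s(\alpha)=i} A\alpha$; this lets one pass between arbitrary modules and the modules $A\alpha$, since $\Omega^2$ of any module is a direct sum of copies of various $A\alpha$.

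\emph{Part (2)} is the quickest: by Lemma \ref{lem:homological}(1), $A$ is CM-free (i.e.\ has no indecomposable non-projective Gorenstein-projective module) precisely when no $A\alpha$ is non-projective Gorenstein-projective, which by that lemma happens precisely when no vertex of $\mathcal{R}_A$ lies in a perfect component; but by Lemma \ref{lem:re-quiver}(3) a vertex lies in a perfect component iff that component \emph{is} a perfect component, so the condition is exactly that $\mathcal{R}_A$ contains no perfect component. (One should note that a monomial algebra is automatically CM-finite, observed already in the proof of Proposition \ref{prop:bijection}, so "CM-free" is the honest statement here.) \emph{Part (3)}: $A$ has finite global dimension iff every simple $S_i$ has finite projective dimension iff every $A\alpha$ has finite projective dimension (using $\Omega(S_i)=\bigoplus_{s(\alpha)=i}A\alpha$ and that there are finitely many arrows), which by Lemma \ref{lem:homological}(2) means every vertex of $\mathcal{R}_A$ is bounded, i.e.\ $\mathcal{R}_A$ has no oriented cycle in any component — equivalently every component is acyclic. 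One direction needs the remark, stated before Lemma \ref{lem:homological}, that a bounded vertex has all out-paths of length $< \#\mathcal{R}_A{}_0$, giving a uniform bound on projective dimensions.

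\emph{Part (1)} is the substantive one. For the "only if" direction I would argue contrapositively: if some component $\mathcal{C}$ is neither perfect nor acyclic, then (since a connected component of a finite quiver that contains an oriented cycle but is not itself a basic cycle must contain a vertex $\alpha$ that is not bounded and does not lie on a single disjoint cycle — concretely, a vertex lying on a cycle with an extra incoming or outgoing arrow, or a vertex from which one can reach a cycle, forcing unbounded out-paths while $\mathcal{R}_A$ is not a basic cycle at $\alpha$) one produces an arrow $\alpha$ whose vertex is unbounded and not in a perfect component; then Lemma \ref{lem:homological}(3) says no syzygy $\Omega^d(A\alpha)$ is Gorenstein-projective, so by Lemma \ref{lem:Gorenstein} the algebra is not $d$-Gorenstein for any $d$, hence not Gorenstein. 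For the "if" direction, assume every component is perfect or acyclic, and let $d$ be the length of the longest path in the acyclic components. Given any $A$-module $M$, I want to show $\Omega^{d+2}(M)$ is Gorenstein-projective and then invoke Lemma \ref{lem:Gorenstein}; but left and right must both be checked, so I would apply the same statement to $A^{\mathrm{op}}$ after observing that $A^{\mathrm{op}}$ is again quadratic monomial and its relation quiver is the opposite quiver $(\mathcal{R}_A)^{\mathrm{op}}$, which has the same components-are-perfect-or-acyclic property with the same $d$. To show $\Omega^{d+2}(M)$ is Gorenstein-projective: $\Omega^2(M)$ is a direct sum of modules $A\alpha$ (via the radical description above applied twice), so $\Omega^{d+2}(M)=\Omega^d(\Omega^2 M)$ is a direct sum of modules $\Omega^d(A\alpha)$; for each $\alpha$, if $\alpha$ lies in a perfect component then $A\alpha$ is already Gorenstein-projective (Lemma \ref{lem:homological}(1)) hence so are all its syzygies, while if $\alpha$ lies in an acyclic component then every path in $\mathcal{R}_A$ out of $\alpha$ has length $\le d$, so by iterating (\ref{equ:syzygy}) $\Omega^d(A\alpha)$ is a sum of $A\beta$'s with $\beta$ having no successors, i.e.\ $A\beta$ projective (or the syzygy is already zero) — in any case $\Omega^d(A\alpha)$ is projective, hence Gorenstein-projective. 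So $\Omega^{d+2}(M)$ is Gorenstein-projective on the left, and symmetrically on the right, whence $A$ is $(d+2)$-Gorenstein.

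The main obstacle is the combinatorial lemma buried in the "only if" direction of (1): extracting, from a connected component of a finite quiver that contains an oriented cycle but is not a basic cycle, a vertex that is simultaneously \emph{not bounded} and \emph{not in a perfect component}, so that Lemma \ref{lem:homological}(3) applies. This requires a short case analysis — a non-basic component with a cycle either has a vertex of in-degree or out-degree $\ge 2$ on the cycle, or a vertex off the cycle from which the cycle is reachable — and in each case one checks both that the vertex is unbounded (a cycle is reachable from it) and that it cannot itself sit in a basic-cycle component (which it doesn't, by hypothesis). Everything else is bookkeeping with the syzygy formula and the left/right symmetry via $A^{\mathrm{op}}$.
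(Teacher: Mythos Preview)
Your overall strategy matches the paper's almost exactly --- (2) via Proposition~\ref{prop:tuilun} and Lemma~\ref{lem:re-quiver}(3), the ``only if'' of (1) by contraposition through Lemma~\ref{lem:homological}(3), and the ``if'' of (1) by reducing $\Omega^{d+2}(M)$ to syzygies of the $A\alpha$ and applying Lemma~\ref{lem:homological}(1),(2). Your direct proof of (3) via Lemma~\ref{lem:homological}(2) is fine; the paper instead derives (3) from (1) and (2) using the standard fact that finite global dimension is equivalent to Gorenstein plus CM-free.

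There is, however, one genuine gap. You assert that $\Omega^2(M)$ is a direct sum of modules $A\alpha$ ``via the radical description above applied twice''. The radical description only tells you that $\Omega(M)\subseteq \mathrm{rad}\,P=\bigoplus A\alpha$, i.e.\ that $\Omega(M)$ is a \emph{submodule} of such a sum; it does not follow that $\Omega(M)$, or $\Omega^2(M)$, is itself a direct sum of $A\alpha$'s. (Quadratic monomial does not mean $\mathrm{rad}^2 A=0$, so submodules of $\bigoplus A\alpha$ need not split off.) The paper closes this gap by invoking Lemma~\ref{lem:basic} (Zimmermann Huisgen's theorem): any second syzygy is a direct sum of cyclic modules $Ap$ for nonzero paths $p$, and then one uses the isomorphism $Ap\simeq A\alpha$ for $p=\alpha p'$ (valid in the quadratic case) to reduce to the $A\alpha$. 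You should cite Lemma~\ref{lem:basic} here rather than appeal to the radical description.

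A smaller point: your $A^{\mathrm{op}}$ detour is unnecessary. Lemma~\ref{lem:Gorenstein} (Hoshino) is already a two-sided characterization stated one-sidedly --- showing that every left module has Gorenstein-projective $(d+2)$-nd syzygy is by itself enough to conclude that $A$ is $(d+2)$-Gorenstein. The paper does not pass to $A^{\mathrm{op}}$.
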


\begin{proof}
The statement (2) is an immediate consequence of Proposition \ref{prop:tuilun} and Lemma \ref{lem:re-quiver}(3), while the final statement is an immediate consequence of (1) and (2). Here, we recall a well-known consequence of Lemma \ref{lem:Gorenstein}: an algebra $A$ has finite global dimension if and only if it is Gorenstein and CM-free.

We now prove (1). Recall from Lemma \ref{lem:Gorenstein} that the algebra $A$ is Gorenstein if and only if there exists a natural number $n$ such that $\Omega^n(M)$ is Gorenstein-projective for any $A$-module $M$.

For ``only if"  part, we assume the contrary. Then there is  an arrow $\alpha$, whose corresponding vertex in $\mathcal{R}_A$ is not bounded and does not belong to a perfect component. By Lemma \ref{lem:homological} (3), we infer that $A$ is not Gorenstein. A contradiction!

For the ``if" part, let $\alpha$ be an arrow. Then the $A$-module $A\alpha$ is either Gorensein-projective or has finite projective dimension at most $d$; see Lemma \ref{lem:homological}(1) and (2). We infer that the $A$-module $\Omega^{d} (A\alpha)$ is Gorenstein-projective. We observe that for a nonzero path $p=\alpha p'$ of length at least two, we have a  module isomorphism $A\alpha \simeq Ap$, sending $x$ to $xp'$. So we conclude that for any nonzero path $p$, the $A$-module $\Omega^{d}(Ap)$ is Gorenstein-projective.

 Let $M$ be any $A$-module. Then by Lemma \ref{lem:basic} we have an isomorphism between $\Omega^2(M)$ and a direct sum of the modules $Ap$ for some nonzero paths $p$. It follows by above that the syzygy module $\Omega^{d+2}(M)$ is Gorenstein-projective. This proves that the algebra $A$ is $(d+2)$-Gorenstein.

\end{proof}

\begin{exm}\label{exm:quadratic}
{\rm Let $A$ be a quadratic monomial algebra. We assume that for each arrow $\alpha$, there exists at most one arrow $\beta$ with $ \alpha\beta\in \mathbf{F}$ and at most one arrow $\gamma$ with $\gamma\alpha\in \mathbf{F}$.  Then the algebra $A$ is Gorenstein. In particular, a gentle algebra  satisfies these conditions. As a consequence, we recover the main part of  \cite[Theorem 3.4]{GR}.

Indeed, the assumption implies that at each vertex in $\mathcal{R}_A$, there is at most one arrow starting and at most one arrow terminating. It forces that each connected component is either perfect or acyclic.}
\end{exm}

We recall from \cite[Lemma 3.4]{Chen2011} that for a semisimple abelian category $\mathcal{A}$ and an auto-equivalence $\Sigma$ on $\mathcal{A}$, there is a unique triangulated structure on $\mathcal{A}$ with $\Sigma$ the translation functor. Indeed, all the triangles are split. We denote the resulting triangulated category by $(\mathcal{A}, \Sigma)$. We call a triangulated category \emph{semisimple} provided that it is triangle equivalent to $(\mathcal{A}, \Sigma)$ for some semisimple abelian category $\mathcal{A}$.

Let $n\geq 1$. Consider the algebra automorphism $\sigma\colon k^n\rightarrow k^n$ defined by
\begin{align}\label{equ:sigma}
\sigma(\lambda_1, \lambda_2, \cdots, \lambda_n)=(\lambda_2, \cdots, \lambda_n, \lambda_1).
 \end{align}
 Then $\sigma$ induces an automorphism $\sigma^*$ on the category $k^n\mbox{-mod}$ by twisting the module actions. We denote by $\mathcal{T}_n=(k^n\mbox{-mod}, \sigma^*)$ the resulting triangulated category.

The main result in this section is as follows. It is inspired by the work \cite{Kal}, and extends \cite[Theorem 2.5(b)]{Kal}.

\begin{thm}\label{thm:stable}
Let  $A=kQ/I$ be a quadratic monomial algebra. Assume that $\mathcal{C}_1$, $\mathcal{C}_2, \cdots, \mathcal{C}_m$ are all the perfect components of $\mathcal{R}_A$, and that each $d_i$ denotes the number of vertices in $\mathcal{C}_i$. Then there is a triangle equivalence
$$A\mbox{-\underline{\rm Gproj}}\stackrel{\sim}\longrightarrow \mathcal{T}_{d_1}\times \mathcal{T}_{d_2}\times \cdots \times \mathcal{T}_{d_m}.$$
\end{thm}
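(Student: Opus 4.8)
The plan is to combine the classification in Theorem~\ref{thm:main} (or its quadratic refinement, Proposition~\ref{prop:tuilun}) with a direct analysis of the syzygy action on the indecomposable objects, and then invoke the rigidity of triangulated structures on semisimple abelian categories recalled just above the theorem. First I would record that, by Proposition~\ref{prop:tuilun} and Lemma~\ref{lem:re-quiver}(3), the set ${\rm ind}\; A\mbox{-\underline{\rm Gproj}}$ is in bijection with the disjoint union of the vertex sets of the perfect components $\mathcal{C}_1,\dots,\mathcal{C}_m$; an indecomposable object is $A\alpha$ with $\alpha$ a vertex of some $\mathcal{C}_i$. Next I would compute the syzygy functor on these objects: by the isomorphism (\ref{equ:syzygy}) (equivalently (\ref{equ:3})), $\Omega(A\alpha)\simeq A\beta$ where $\beta$ is the unique immediate successor of $\alpha$ inside the basic cycle $\mathcal{C}_i$. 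So $\Omega$ permutes the $d_i$ indecomposables supported on $\mathcal{C}_i$ cyclically, and in particular $\Omega^{d_i}$ fixes each of them. This already shows the stable category decomposes as a product of $m$ blocks, one for each perfect component, the $i$-th block having exactly $d_i$ indecomposables permuted cyclically by $\Omega$.

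The next step is to identify each block as a semisimple triangulated category. Here I would show that $A\mbox{-\underline{\rm Gproj}}$ is a \emph{semisimple abelian} category, i.e.\ that $\underline{\rm Hom}_A(A\alpha, A\beta)=0$ whenever $A\alpha\not\simeq A\beta$ are indecomposable Gorenstein-projectives, and that $\underline{\rm End}_A(A\alpha)$ is a division ring (in fact $k$, if $k$ is a field over which these are split — but one should be careful and perhaps only claim $\underline{\rm End}_A(A\alpha)\cong k$ using that $A\alpha$ is generated by a path and the computation below). The tool is Lemma~\ref{lem:Hom}: since $(\,\cdot\,,\alpha)$ sits in a perfect pair, $\theta_\alpha$ is an isomorphism by Lemma~\ref{lem:iso}, so $\underline{\rm Hom}_A(A\alpha, A\beta)\cong (\alpha A\cap A\beta)/(\alpha A\beta)$. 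In the quadratic monomial setting $\alpha$ and $\beta$ are arrows; a path in $\alpha A\cap A\beta$ is a nonzero path starting with $\beta$ and ending with $\alpha$, while $\alpha A\beta$ consists of those that are \emph{properly} longer than both. One checks that $\alpha A\cap A\beta$ modulo $\alpha A\beta$ is spanned by the path $\alpha$ itself if $\alpha=\beta$, and is zero otherwise — the point being that any nonzero path of length $\geq 2$ starting with $\beta$ and ending with $\alpha$ already lies in $\alpha A\beta$ (write it as $\alpha p\beta$, using that its first and last arrows are $\beta$ and $\alpha$; one must rule out length-one and length-two coincidences, which is where $\beta\alpha\in\mathbf F$ — i.e.\ $\alpha$ and $\beta$ adjacent in $\mathcal R_A$ — enters, and a short case check finishes it). This gives $\underline{\rm Hom}_A(A\alpha, A\beta)\cong k$ if $\alpha=\beta$ and $0$ otherwise, so $A\mbox{-\underline{\rm Gproj}}$ is equivalent as a $k$-linear category to a product of copies of $k\mbox{-mod}$, one factor for each indecomposable, hence semisimple abelian.

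Finally I would assemble the triangle equivalence. Having shown $A\mbox{-\underline{\rm Gproj}}$ is semisimple abelian, the recalled result \cite[Lemma 3.4]{Chen2011} says its triangulated structure is the \emph{unique} one with translation functor $\Sigma$, a quasi-inverse of $\Omega$ (equivalently $\Omega$ itself, up to relabelling). So it suffices to produce an equivalence of abelian categories $A\mbox{-\underline{\rm Gproj}}\xrightarrow{\sim}\prod_{i=1}^m k^{d_i}\mbox{-mod}$ that intertwines $\Omega$ with $\prod_i (\sigma^{-1})^*$ (or $\sigma^*$, matching conventions). Concretely: fix in each perfect component $\mathcal C_i$ an enumeration $\alpha_1^{(i)}, \alpha_2^{(i)}, \dots, \alpha_{d_i}^{(i)}$ of its vertices \emph{in cyclic order along the basic cycle}, so that $\Omega(A\alpha_j^{(i)})\simeq A\alpha_{j+1}^{(i)}$ (indices mod $d_i$). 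The functor sending $A\alpha_j^{(i)}$ to the simple $k^{d_i}$-module $S_j$ at the $j$-th coordinate extends (by semisimplicity) to a $k$-linear equivalence onto $\prod_i k^{d_i}\mbox{-mod}$, and under it $\Omega$ corresponds to the cyclic coordinate shift, which is precisely $\sigma^*$ on each factor by (\ref{equ:sigma}). Invoking uniqueness of the triangulated structure then upgrades this to a triangle equivalence $A\mbox{-\underline{\rm Gproj}}\xrightarrow{\sim}\mathcal T_{d_1}\times\cdots\times\mathcal T_{d_m}$, as claimed.

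I expect the main obstacle to be the Hom-computation establishing semisimplicity, i.e.\ showing $\underline{\rm Hom}_A(A\alpha, A\beta)$ vanishes for $A\alpha\not\simeq A\beta$ and is one-dimensional otherwise: Lemma~\ref{lem:Hom} reduces it to the combinatorial statement about $(\alpha A\cap A\beta)/(\alpha A\beta)$, but carrying that out correctly requires using that $\alpha$ lies in a perfect component (so that nonzero paths through $\alpha$ are controlled by $\mathbf F$ consisting of length-two paths) and handling the small-length boundary cases with care. Everything after that — the cyclic description of $\Omega$ and the appeal to \cite[Lemma 3.4]{Chen2011} — is comparatively formal.
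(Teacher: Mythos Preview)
Your proposal is correct and follows essentially the same route as the paper: compute $\underline{\rm Hom}_A(A\alpha,A\beta)$ via Lemma~\ref{lem:Hom} (using Lemma~\ref{lem:iso} for $\theta_\alpha$), deduce semisimplicity, and then match translation functors using the cyclic $\Omega$-action on each perfect component. The one place you overcomplicate things is the Hom computation: for distinct arrows $\alpha\neq\beta$, any nonzero path in $\alpha A\cap A\beta$ has length at least two and hence factors as $\alpha x\beta$, so $\alpha A\cap A\beta=\alpha A\beta$ with no case analysis and no use of perfectness beyond invoking $\theta_\alpha$; likewise $\alpha A\cap A\alpha=k\alpha\oplus \alpha A\alpha$ directly.
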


\begin{proof}
Let $\alpha$ be a perfect arrow. In particular, the morphism $\theta_\alpha$ in (\ref{equ:0}) is an isomorphism; see Lemma \ref{lem:iso}. Let $\beta$ be a different arrow.  Then we have $\alpha A\cap A\beta=\alpha A\beta$. We apply Lemma \ref{lem:Hom} to infer that $\underline{\rm Hom}_A(A\alpha, A\beta)=0$.  We observe that $\alpha A\cap A\alpha= k\alpha\oplus \alpha A\alpha$. Applying  Lemma \ref{lem:Hom} again, we infer that $\underline{\rm Hom}_A(A\alpha, A\alpha)=k{\rm Id}_{A\alpha}$.

We recall from Proposition \ref{prop:tuilun} that up to isomorphism, all the indecomposable objects in $A\mbox{-\underline{\rm Gproj}}$ are of the form $A\alpha$, where $\alpha$ is a perfect arrow. Recall from Lemma \ref{lem:re-quiver}(3) that an arrow $\alpha$ is perfect if and only if the corresponding vertex in $\mathcal{R}_A$ belongs to a perfect component. From the above calculation on Hom-spaces, we deduce that the categories $A\mbox{-\underline{\rm Gproj}}$ and $\mathcal{T}_{d_1}\times \mathcal{T}_{d_2}\times \cdots \times \mathcal{T}_{d_m}$ are equivalent. In particular, both categories are semisimple abelian.  To complete the proof, it suffices to verify that such an equivalence respects the translation functors.

Recall that the translation functor $\Sigma$ on  $A\mbox{-\underline{\rm Gproj}}$ is a quasi-inverse of the syzygy functor $\Omega$. For a perfect arrow $\alpha$ lying in the perfect component $\mathcal{C}_i$,  its relation-cycle is of the form $\alpha=\alpha_1, \alpha_2, \cdots, \alpha_{d_i},\alpha_{d_i+1}=\alpha$. Recall from (\ref{equ:syzygy}) that $\Omega(A\alpha_i)\simeq A\alpha_{i-1}$, and thus $\Sigma (A\alpha_i)=A\alpha_{i+1}$. On the other hand, the translation functor on $\mathcal{T}_{d_i}$ is induced by the algebra automorphism $\sigma$ in (\ref{equ:sigma}). By comparing these two translation functors, we infer that they are respected by the equivalence.
\end{proof}

Let us illustrate the results by an example.

\begin{exm}
{\rm Let $Q$ be the following quiver
\[\xymatrix{ 1 \ar@/^/[r]^-\alpha & 2 \ar@/^/[l]^-{\beta} \ar@/^/[r]^-\gamma & 3. \ar@/^/[l]^-{\delta} }\]
Let $I$ be the two-sided ideal  of $kQ$ generated by $\{\beta\alpha, \alpha\beta, \delta\gamma\}$, and let $A=kQ/I$. Then the relation quiver $\mathcal{R}_A$ is as follows.
\[\xymatrix{ \alpha \ar@/^/[r]^-{[\beta\alpha]} & \beta \ar@/^/[l]^-{[\alpha\beta]} &  \gamma\ar[r]^-{[\delta\gamma]} & \delta }\]
By Proposition \ref{prop:charac}(1) the algebra $A$ is Gorenstein. We mention that $A$ is not a gentle algebra. Indeed, the algebra $A$ is $2$-Gorenstein. In particular, the bound of the selfinjective dimension obtained in Proposition \ref{prop:charac}(1) is not sharp.

By Lemma \ref{lem:re-quiver}(3), all the perfect paths in $A$ are $\{\alpha, \beta\}$. Hence, there are only two indecomposable non-projective Gorenstein-projective $A$-modules $A\alpha$ and $A\beta$. By Theorem \ref{thm:stable} we have a triangle equivalence $A\mbox{-\underline{Gproj}}\stackrel{\sim}\longrightarrow \mathcal{T}_2$.}
\end{exm}

We observe a slight extension of Theorem \ref{thm:stable}. For a monomial algebra $A=kQ/I$, an \emph{overlap} in $A$ is given by two perfect paths $p$ and $q$ which satisfy  one of the following conditions:
\begin{enumerate}
\item[(O1)] $p=q$, and $p=p'x$ and $q=xq'$ for some nontrivial paths $x$, $p'$ and $q'$ with the path $p'xq'$ nonzero.
\item[(O2)] $p\neq q$, and $p=p'x$ and $q=xq'$ for some nontrivial path $x$ with the path $p'xq'$ nonzero.
\end{enumerate}
We observe that if $A$ is quadratic monomial, there is no overlap in $A$ because all perfect paths are indeed arrows.

\begin{prop}\label{prop:overlap}
Let $A=kQ/I$ be a monomial algebra. We denote by $d_1, d_2, \cdots, d_m$ the lengths of all the relation-cycles in $A$, where we identify relation-cycles up to cyclic permutations. Then the following statements are equivalent.
\begin{enumerate}
\item There is no overlap in $A$.
\item There is a triangle equivalence $A\mbox{-\underline{\rm Gproj}}\stackrel{\sim}\longrightarrow \mathcal{T}_{d_1}\times \mathcal{T}_{d_2}\times \cdots \times \mathcal{T}_{d_m}.$
\item The stable category $A\mbox{-\underline{\rm Gproj}}$ is semisimple.
\end{enumerate}
\end{prop}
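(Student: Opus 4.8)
The plan is to prove the equivalence of (1), (2), and (3) by establishing the cycle $(1)\Rightarrow(2)\Rightarrow(3)\Rightarrow(1)$, with the bulk of the work in $(1)\Rightarrow(2)$ and in the contrapositive of $(3)\Rightarrow(1)$.

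For $(1)\Rightarrow(2)$, I would follow the template of the proof of Theorem \ref{thm:stable}. By Theorem \ref{thm:main}, every indecomposable object of $A\mbox{-\underline{\rm Gproj}}$ is of the form $Ap$ for a perfect path $p$, and by Lemma \ref{lem:unique} distinct perfect paths give non-isomorphic modules; the perfect paths are partitioned into relation-cycles of lengths $d_1,\dots,d_m$. The first step is the Hom-space computation: for perfect paths $p,q$, Lemma \ref{lem:iso} gives that $\theta_q$ is an isomorphism, so Lemma \ref{lem:Hom} identifies $\underline{\rm Hom}_A(Aq, Ap)$ with $(qA\cap Ap)/qAp$. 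The key claim is that \emph{no overlap in $A$} forces $qA\cap Ap = qAp$ when $p\neq q$ (so the Hom-space vanishes), and $qA\cap Aq = k q\oplus qAq$ when $p=q$ (so the endomorphism ring is $k$). Indeed, a nonzero path in $qA\cap Ap$ that is not in $qAp$ (resp.\ not a scalar multiple of $q$ or in $qAq$) is precisely a nonzero path $q r = r' p$ exhibiting an overlap of type (O2) (resp.\ (O1)); here one uses that $p$ and $q$ are nontrivial. This shows $A\mbox{-\underline{\rm Gproj}}$ is a semisimple abelian category with simple objects indexed by perfect paths, hence equivalent as a category to $\prod_i \mathcal{T}_{d_i}$. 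Then, exactly as in Theorem \ref{thm:stable}, one checks that the syzygy functor $\Omega$ acts on the indecomposables within each relation-cycle by $Ap_i\mapsto Ap_{i-1}$ via the exact sequence (\ref{equ:3}), matching the cyclic shift $\sigma$ of (\ref{equ:sigma}) on $\mathcal{T}_{d_i}$; since a semisimple abelian category carries a unique triangulated structure for a given translation functor (by \cite[Lemma 3.4]{Chen2011}), the category equivalence upgrades to a triangle equivalence.

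The implication $(2)\Rightarrow(3)$ is immediate, since each $\mathcal{T}_{d_i}=(k^{d_i}\mbox{-mod},\sigma^*)$ is semisimple and a finite product of semisimple triangulated categories is semisimple (it is $(\mathcal{A},\Sigma)$ for the product abelian category $\mathcal{A}=\prod_i k^{d_i}\mbox{-mod}$). For $(3)\Rightarrow(1)$, I would argue contrapositively: suppose there is an overlap given by perfect paths $p,q$ with $p=p'x$, $q=xq'$, and $p'xq'$ nonzero. Then, via the identification of Lemma \ref{lem:Hom} as above, the element $p'xq' = p q' = p' q$ lies in $qA\cap Ap$ but not in $qAp$ (in case (O2), since $p'xq'$ has length strictly less than $\ell(q)+\ell(p)$; in case (O1), it is neither a scalar multiple of $q=p$ nor in $qAq$, for the same length reason), so it yields a nonzero class in $\underline{\rm Hom}_A(Aq, Ap)$. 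Thus $A\mbox{-\underline{\rm Gproj}}$ has a nonzero non-invertible morphism between indecomposables (when $p\neq q$) or a local endomorphism ring strictly larger than $k$ at an indecomposable (when $p=q$); in either case the category cannot be semisimple, since in a semisimple $k$-category $\underline{\rm Hom}$-spaces between non-isomorphic indecomposables vanish and endomorphism rings of indecomposables are division rings, which here would have to be $k$ by finite-dimensionality. This contradicts (3).

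The main obstacle I anticipate is the overlap bookkeeping in the Hom-space computation: making precise that a nonzero path lying in $qA\cap Ap$ but outside $qAp$ is \emph{exactly} the data of an overlap, including carefully handling the equality case $p=q$ where one must separate out the subspace $kq\oplus qAq$, and verifying the nonvanishing condition ``$p'xq'$ nonzero'' corresponds correctly to the path not dying in $A$. One subtlety is that a priori an element of $qA\cap Ap$ could be a $k$-linear combination of several nonzero paths, but since $qA$ has a basis of nonzero paths of the form $q(-)$ and $Ap$ has a basis of nonzero paths of the form $(-)p$, any basis path appearing must individually lie in both, so the analysis reduces to single paths; I would state this reduction explicitly. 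The rest—the translation-functor matching and the appeal to \cite[Lemma 3.4]{Chen2011}—is essentially identical to Theorem \ref{thm:stable} and should be handled by reference.
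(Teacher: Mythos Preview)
Your approach coincides with the paper's: both run the cycle $(1)\Rightarrow(2)\Rightarrow(3)\Rightarrow(1)$, reduce the stable Hom computation to Lemma~\ref{lem:Hom} via the isomorphism $\theta_p$ from Lemma~\ref{lem:iso}, identify the overlap conditions (O1)/(O2) with properness of $kp\oplus pAp\subseteq pA\cap Ap$ (resp.\ with $(pA\cap Aq)/pAq\neq 0$), and then defer to the argument of Theorem~\ref{thm:stable} for $(1)\Rightarrow(2)$.

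Two small corrections. First, a labeling slip in your $(3)\Rightarrow(1)$: the element $p'xq'=pq'=p'q$ lies in $pA\cap Aq$, not in $qA\cap Ap$, so via Lemma~\ref{lem:Hom} (with $a=p$) it yields a nonzero class in $\underline{\rm Hom}_A(Ap,Aq)$ rather than in $\underline{\rm Hom}_A(Aq,Ap)$; this is harmless for the conclusion. Second, and this is a genuine gap, your assertion that the stable endomorphism division ring ``would have to be $k$ by finite-dimensionality'' fails when $k$ is not algebraically closed, and the paper makes no such assumption. The paper's remedy is to observe that ${\rm End}_A(Ap)/{\rm rad}\,{\rm End}_A(Ap)\simeq k$ (since $A$ is a quotient of a path algebra over $k$), so any division-algebra quotient of ${\rm End}_A(Ap)$---in particular $\underline{\rm Hom}_A(Ap,Ap)$ under hypothesis (3)---is forced to equal $k$. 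Alternatively, you could note that the endomorphism corresponding to $p'xq'$ strictly increases length and is therefore nilpotent, directly contradicting the division-ring property without appealing to the residue field.
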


\begin{proof}
We make the following observation. The condition (O1) is equivalent to the condition that the inclusion  $kp\oplus pAp\subseteq pA\cap Ap$ is proper, by Lemma \ref{lem:Hom} which is equivalent to that the inclusion $k{\rm Id}_{Ap} \subseteq \underline{\rm Hom}_A(Ap, Ap)$ is proper. Similarly, the condition (O2) is equivalent to the condition that $\underline{\rm Hom}_A(Ap, Aq)\neq 0$.

Using the observation, the implication ``$(1)\Rightarrow (2)$"  follows by the same argument as in the proof of Theorem \ref{thm:stable}. The implication ``$(2)\Rightarrow (3)$" is trivial.

To show ``$(3)\Rightarrow (1)$", we assume that $A\mbox{-\underline{\rm Gproj}}$ is semisimple. Recall that for a perfect path $p$, $Ap$ is an indecomposable object in $A\mbox{-\underline{\rm Gproj}}$. Therefore, by the semisimplicity condition, we have that $\underline{\rm Hom}_A(Ap, Ap)$ is  a division algebra. On the other hand, we observe  that ${\rm End}_A(Ap)/{{\rm rad}\; {\rm End}_A(Ap)}$ is isomorphic to $k$, where  ${{\rm rad}\; {\rm End}_A(Ap)}$ denotes the Jacobson radical. It follows that $\underline{\rm Hom}_A(Ap, Ap)$ is isomorphic to $k$, that is, we have $\underline{\rm Hom}_A(Ap, Ap)=k{\rm Id}_{Ap}$.  For two distinct perfect paths $p$ and $q$, the indecomposable objects $Ap$ and $Aq$ are not  isomorphic. Therefore, by the semisimplicity condition, we have $\underline{\rm Hom}_A(Ap,  Aq)=0$.  Then by the observation above, we infer that $A$ has no overlap.
\end{proof}

\begin{exm}
{\rm Let $Q$ be the following quiver.
\[\xymatrix{
1 \ar @/^/[r]^-\alpha & 2\ar @/^/[l]^-\beta
}\]
Consider the ideal $I$ of $kQ$ generated by $\beta\alpha\beta\alpha$. Let $A=kQ/I$. Then all the perfect pairs are given by $(\beta, \alpha\beta\alpha)$, $(\beta\alpha, \beta\alpha)$ and $(\beta\alpha\beta, \alpha)$; compare Lemma \ref{lem:Nak}(3). It follows that the unique perfect path is $\beta\alpha$, whose relation-cycle has length one. Then there is no overlap in $A$. By Proposition \ref{prop:overlap}, we have a triangle equivalence $A\mbox{-\underline{\rm Gproj}}\stackrel{\sim}\longrightarrow \mathcal{T}_{1}$. We mention that this equivalence can be deduced from \cite[Proposition 1]{Ringel},  or from \cite[Corollary 3.11]{CY} because $A$ is $2$-Gorenstein. }
\end{exm}

\section{An Example: the Nakayama monomial case}

In this section, we describe another  example for  Theorem \ref{thm:main}, where the quiver is a basic cycle. In this case, the monomial algebra $A$ is Nakayama. We recover a key characterization result of Gorenstein-projective $A$-modules in \cite{Ringel}.

Let $n\geq 1$. Let $Z_n$ be a basic $n$-cycle with the vertex set $\{1, 2, \cdots, n\}$ and the arrow set $\{\alpha_1, \alpha_2,\cdots, \alpha_n\}$, where $s(\alpha_i)=i$ and $t(\alpha_i)=i+1$. Here, we identify $n+1$ with $1$. Indeed, the vertices are indexed by the cyclic group $\mathbb{Z}/n\mathbb{Z}$.  For each integer $m$, we denote by $[m]$ the unique integer satisfying  $1\leq [m]\leq n$ and $m\equiv [m]$ modulo $n$. Hence, for each vertex $i$, $t(\alpha_i)=[i+1]$.  We denote by $p_i^l$ the unique path in $Z_n$ starting at $i$ of length $l$. We observe that $t(p_i^l)=[i+l]$.

Let $I$ be a monomial ideal of $kZ_n$, and let $A=kZ_n/I$ be the corresponding monomial algebra. Then $A$ is a connected Nakayama algebra which is elementary and has no simple projective modules. Indeed, any connected Nakayama algebra which is elementary and has no simple projective modules is of this form.

For each $1\leq i\leq n$, we denote by $P_i=Ae_i$ the  indecomposable projective $A$-module corresponding to $i$. Set $c_i={\rm dim}_k\; P_i$. Following \cite{Gus}, we define a map $\theta\colon \{1, 2, \cdots, n\}\rightarrow \{1, 2, \cdots, n\}$ such that $\theta(i)=[i+c_i]$. An element in $\bigcap_{d\geq 0} {\rm Im}\; \theta^d$ is called \emph{$\theta$-cyclic}.  We observe that $\theta$ restricts to a permutation on the set of $\theta$-cyclic elements.

Following \cite{Ringel} we call the projective $A$-module $P_i$ \emph{minimal} if its radical ${\rm rad}\; P_i$ is non-projective, or equivalently, each nonzero proper submodule of $P_i$ is non-projective. Recall that the projective cover of ${\rm rad}\; P_i$ is $P_{[i+1]}$. Hence, the projective $A$-module $P_i$ is minimal if and only if $c_i\leq c_{[i+1]}$. We observe that if $P_i$ is non-minimal, we have $c_i=c_{[i+1]}+1$.

We denote by $S_i$ the simple $A$-module corresponding to $i$.  The following terminology is taken from \cite{Ringel}. If $P_i$ is minimal,  we will say that the vertex $i$, or the corresponding simple module $S_i$, is \emph{black}. The vertex $i$, or $S_i$, is \emph{$\theta$-cyclically black} if $i$ is $\theta$-cyclic and $\theta^d(i)$ is black for each $d\geq 0$.

We recall that $\mathbf{F}$ denotes the set of minimal paths contained in $I$.

\begin{lem}\label{lem:Nak}
Let $1\leq i\leq n$ and $l\geq 0$. Let $p, q$ be two nonzero nontrivial paths in $A$ such that $s(p)=t(q)$. Then we have the following statements.
\begin{enumerate}
\item The path $p_i^l$ belongs to $I$ if and only if $l\geq c_i$.
\item The path $p_i^l$ belongs to $\mathbf{F}$ if and only if the $A$-module $P_i$ is minimal and $l=c_i$.
\item  The pair $(p, q)$ is perfect if and only if the concatenation $pq$ lies in $\mathbf{F}$. In this case, the vertex $s(q)$ is black.
\item If $(p, q)$ is a perfect pair, then $t(p)=\theta(s(q))$.
\end{enumerate}
\end{lem}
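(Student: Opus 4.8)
The plan is to read off the arithmetic directly from the structure of a perfect pair, using parts (1)--(3) of the lemma that are already at our disposal. Let $(p,q)$ be a perfect pair. By part (3), the concatenation $pq$ lies in $\mathbf{F}$, and by part (2) applied to the path $pq$, which starts at $i:=s(q)$, we must have $pq=p_i^{c_i}$; in particular $P_i$ is minimal, so the vertex $i=s(q)$ is black, and the length of $pq$ is exactly $c_i$. Write $\ell=\ell(q)$ for the length of $q$, so that $\ell(p)=c_i-\ell$.

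The next step is to locate the terminating vertices. Since $q$ is the unique nonzero path of length $\ell$ starting at $i$, we have $t(q)=[i+\ell]$, and since $p$ is the unique nonzero path of length $c_i-\ell$ starting at $t(q)=[i+\ell]$, we get $t(p)=[[i+\ell]+(c_i-\ell)]=[i+c_i]$. By the very definition of the Gustafson map $\theta$, namely $\theta(i)=[i+c_i]$, this says precisely $t(p)=\theta(i)=\theta(s(q))$, which is the desired identity.

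This argument is short and essentially computational, so I do not expect a genuine obstacle; the only point requiring a little care is the invocation of part (2) to pin down $pq=p_i^{c_i}$, which needs that $pq$ is both in $I$ (immediate, since $pq=0$ in $A$ by (P1)) and minimal in $I$ (this is exactly what (3) gives us, as $pq\in\mathbf{F}$). One should also note that $\ell\geq 1$ and $c_i-\ell\geq 1$ because $p$ and $q$ are nontrivial by (P1), so the intermediate vertex $t(q)$ is genuinely a well-defined vertex of $Z_n$ distinct in role from $i$ and $\theta(i)$; but this plays no role in the final equality and is only a sanity check. Hence the whole proof reduces to the two displayed vertex computations above together with the definition of $\theta$.
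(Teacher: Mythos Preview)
Your proof is correct and follows essentially the same approach as the paper: the paper's own argument also uses (3) to place $pq$ in $\mathbf{F}$ and (2) to conclude that the length of $pq$ is $c_i$, then reads off $t(p)=[s(q)+c_i]=\theta(s(q))$ directly. Your version merely spells out the intermediate vertex $t(q)=[i+\ell]$ before reaching $t(p)$, which is a harmless elaboration of the same computation.
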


\begin{proof}
Recall that $P_i=Ae_i$ has a basis given by $\{p_i^j\; |\; 0\leq j< c_i\}$. Then (1) follows trivially.

For the ``only if" part of (2), we assume that $p_i^l$ belongs to $\mathbf{F}$. By the minimality of $p_i^l$, we have $l=c_i$. Moreover, if $P_i$ is not minimal, we have $c_{[i+1]}=c_i-1$ and thus $p_{[i+1]}^{l-1}$ belongs to $I$. This contradicts to the minimality of $p_i^l$. By reversing the argument, we have the ``if" part.

The ``only if" part of (3) follows from Lemma \ref{lem:char}(${\rm P'1}$). Then $pq=p_i^l$ for $i=s(q)$ belongs to $\mathbf{F}$. By (2), the vertex $i$ is black. For the ``if" part, we apply Lemma \ref{lem:char}. We only verify ${\rm (P'2)}$. Assume that $pq'=\gamma \delta$ with $\delta\in \mathbf{F}$. In particular, the path $pq'$ lies in $I$ which shares the same terminating vertex with $pq$. By the minimality of $pq$, we infer that $q'$ is longer than $q$. By $t(q')=t(q)$, we infer that $q'=qx$ for some nonzero path $x$.

We observe by (3) and (2) that the length of $pq$ equals $c_i$. Hence, we have $t(p)=[s(q)+c_i]=\theta(s(q))$, which proves (4).
\end{proof}

The following result describes explicitly all the perfect paths in $A=kZ_n/I$. It is in spirit close to \cite[Lemma 5]{Ringel}.

\begin{prop}\label{prop:Nak}
Let $A=kZ_n/I$ be as above, and $p$ be a nonzero nontrivial path in $A$. Then the path $p$ is perfect if and only if both vertices $s(p)$ and $t(p)$ are $\theta$-cyclically black.
\end{prop}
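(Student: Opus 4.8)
The plan is to translate the combinatorial condition ``$p$ is a perfect path'' into a statement about iterating the Gustafson map $\theta$, using Lemma \ref{lem:Nak}(3) and (4) as the main bridge. By Definition \ref{defn:perfectpaths}, $p$ is perfect if and only if it sits in a relation-cycle $p=p_1,p_2,\dots,p_n,p_{n+1}=p$ of perfect pairs $(p_i,p_{i+1})$. By Lemma \ref{lem:Nak}(3), each such pair satisfies $p_ip_{i+1}\in\mathbf{F}$ and forces $s(p_{i+1})$ to be black; and by Lemma \ref{lem:Nak}(4), $t(p_i)=\theta(s(p_{i+1}))$. Since $s(p_i)=t(p_{i+1})$ (condition (P1)), we get $s(p_i)=t(p_{i+1})=\theta(s(p_{i+2}))$ for all relevant $i$. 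So along the relation-cycle the starting vertices $a_j:=s(p_j)$ satisfy $a_j=\theta(a_{j+1})$; reindexing, the terminating vertices $b_j:=t(p_j)=\theta(s(p_{j+1}))=\theta(a_{j+1})$ behave the same way. Hence a perfect path gives a finite $\theta$-orbit, all of whose vertices are black, i.e. its endpoints are $\theta$-cyclically black. This is the ``only if'' direction.

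For the ``if'' direction, suppose $s(p)$ and $t(p)$ are both $\theta$-cyclically black. First I would check that $t(p)=\theta(s(p))$: since $p$ is a nonzero path and $s(p)$ is black (hence $P_{s(p)}$ is minimal), I claim $p$ must have length exactly $c_{s(p)}-1$ — here is where I expect to spend the most care. The point is that a perfect pair $(p,q)$ requires $pq\in\mathbf{F}$, and by Lemma \ref{lem:Nak}(2) the minimal relation at the black vertex $s(q)$ has length $c_{s(q)}$; by (P2) this $q$ is the unique possibility, forcing $p$ to have length $c_{t(q)\phantom{}}-\ell(q)$ where the composite has length $c_{s(q)}$. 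So actually I need to argue: given that $s(p)$ is $\theta$-cyclically black, there is a unique perfect path with the prescribed starting vertex, obtained by reading off the relation-cycle of vertices $s(p),\theta^{-1}(s(p)),\dots$ around its finite $\theta$-orbit and taking, at each step, the path realizing the minimal relation. Concretely, if $s(p)=i$ with $\theta$-orbit $i=i_0,i_1=\theta(i_0),\dots$ (a cycle since $i$ is $\theta$-cyclic), then $P_{i_j}$ minimal for each $j$ lets me factor each minimal relation $p_{i_{j+1}}^{c_{i_{j+1}}}$ at vertex $i_{j+1}$ as $p'_j\,p''_j$, producing perfect pairs by Lemma \ref{lem:Nak}(3), and stringing these around the orbit yields a relation-cycle. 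The path starting at $i$ in this relation-cycle is then perfect with starting vertex $i$ and, by Lemma \ref{lem:Nak}(4), terminating vertex $\theta(i)$.

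The remaining gap is to see that \emph{every} nonzero path $p$ with $s(p)$ and $t(p)$ both $\theta$-cyclically black is exactly this one. I would argue that $t(p)=\theta(s(p))$ as follows: by Lemma \ref{lem:Nak}(1), $\ell(p)<c_{s(p)}$, so $[s(p)+\ell(p)]$ lies in the range $[s(p)+1],\dots,[s(p)+c_{s(p)}-1]$; the constraint that $t(p)$ is also $\theta$-cyclically black, combined with the description of $\theta$ on the black orbit (namely $\theta(i_j)=[i_j+c_{i_j}]=i_{j+1}$ with all the $P_{i_j}$ minimal so $c_{i_j}\le c_{i_{j+1}}$ along the cycle, forcing the $c$'s constant on the orbit), should pin down $\ell(p)=c_{s(p)}-1$ and hence $t(p)=\theta(s(p))$. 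Then $p$ agrees with the path constructed above by uniqueness (Lemma \ref{lem:unique} / condition (P2)), so $p$ is perfect. The main obstacle is exactly this length/uniqueness bookkeeping: making precise that ``$\theta$-cyclically black'' propagates the minimality of the projectives around the whole orbit and thereby rigidifies both the length of $p$ and the choice of relation-cycle; once that is in place, Lemma \ref{lem:Nak}(3)--(4) do the rest mechanically.
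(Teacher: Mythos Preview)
Your ``only if'' direction is correct and essentially matches the paper's argument.

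Your ``if'' direction has a genuine gap, and it is not a matter of bookkeeping: the two claims you are trying to establish, namely that $t(p)=\theta(s(p))$ and that the perfect path with a given starting vertex is unique, are both \emph{false}. Take the truncated algebra $A=kZ_n/J^d$ with $d\ge 3$ (compare Example~\ref{exm:truncated}): every vertex is black and $\theta$-cyclic, and by Lemma~\ref{lem:Nak}(3) \emph{every} nonzero nontrivial path is perfect. Hence there are $d-1$ perfect paths starting at each vertex, of lengths $1,\dots,d-1$, terminating at $d-1$ distinct vertices; in particular $t(p)\ne\theta(s(p))$ for most of them. Your auxiliary claim that the $c_i$'s are constant along a $\theta$-orbit is also unfounded: minimality of $P_i$ says $c_i\le c_{[i+1]}$, not $c_i\le c_{\theta(i)}$, so these inequalities do not chain around the $\theta$-orbit.

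The paper's argument avoids all of this by not constraining the length of $p$ at all. Since $p$ is nonzero one has $\ell(p)<c_{s(p)}$; since $s(p)$ is black, the minimal relation $p_{s(p)}^{c_{s(p)}}\in\mathbf{F}$ factors as $qp$ for a unique (automatically nonzero) path $q$, and Lemma~\ref{lem:Nak}(3) makes $(q,p)$ a perfect pair. Now $s(q)=t(p)$ is $\theta$-cyclically black by hypothesis, and $t(q)=\theta(s(p))$ is $\theta$-cyclically black because $s(p)$ is; so the same construction applies to $q$ in place of $p$. Iterating produces perfect pairs $(q_{i+1},q_i)$ with $q_0=p$; finiteness of the set of nonzero paths gives $q_l=q_{m+l}$ for some $l\ge 0$, $m>0$, and Lemma~\ref{lem:unique} then forces $q_0=q_m$, yielding a relation-cycle for $p$. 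The hypothesis that $t(p)$ is $\theta$-cyclically black is used exactly once, to launch the iteration at $q$; no relation between $s(p)$ and $t(p)$ via $\theta$ is required, and none holds in general.
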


\begin{proof}
For ``only if" part, we assume that the path $p$ is perfect. We take a relation-cycle $p=p_1, p_2, \cdots, p_m, p_{m+1}=p$. We apply Lemma \ref{lem:Nak}(3) and (4) to each perfect pair $(p_i, p_{i+1})$, and deduce that $s(p_{i+1})$ is black and $t(p_i)=\theta(s(p_{i+1}))$ is also black because of $t(p_i)=s(p_{i-1})$.   Moreover, we have $\theta(s(p_{i+1}))=s(p_{i-1})$, where the subindex is taken modulo $m$. Then each $s(p_i)$ is $\theta$-cyclic and so is $t(p_i)$. Indeed, they are all $\theta$-cyclically black.

For the ``if" part, we assume that both vertices $s(p)$ and $t(p)$ are $\theta$-cyclically black. We claim that there exists a perfect pair $(q, p)$ with both $s(q)$ and $t(q)$ $\theta$-cyclically black.

Since the vertex $i=s(p)$ is black, by Lemma \ref{lem:Nak}(2) the path $p_i^{c_i}$ belongs to $\mathbf{F}$. We observe that $p_i^{c_i}=qp$ for a unique nonzero path $q$. Then $(q, p)$ is a perfect pair by Lemma \ref{lem:Nak}(3). By  Lemma \ref{lem:Nak}(4), we have $t(q)=\theta(s(p))$. Hence, both vertices $s(q)=t(p)$ and $t(q)$ are $\theta$-cyclically black, proving the claim.

Set $q_0=p$ and $q_1=q$. We apply the claim repeatedly and obtain perfect pairs $(q_{i+1}, q_{i})$ for each $i\geq 0$. We assume that $q_l=q_{m+l}$ for some $l\geq 0$ and $m>0$. Then applying Lemma \ref{lem:unique} repeatedly, we infer that $q_0=q_m$. Then we have the desired relation-cycle $p=q_m, q_{m-1}, \cdots, q_1, q_0=p$ for the given path $p$.
\end{proof}

 As a consequence, we recover  a key characterization result of Gorenstein-projective $A$-modules in \cite[Lemma 5]{Ringel}. We denote by ${\rm top}\; X$ the top of an $A$-module $X$.

\begin{cor}
Let $A=kZ_n/I$ be as above, and  $M$ be an indecomposable non-projective $A$-module. Then the module $M$ is Gorenstein-projective if and only if both ${\rm top}\;M$ and ${\rm top}\; \Omega(M)$ are $\theta$-cyclically black simple modules.
\end{cor}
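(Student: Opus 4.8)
The plan is to translate the statement about an indecomposable non-projective module $M$ into a statement about the nonzero path generating it, and then apply Proposition \ref{prop:Nak}. First I would invoke Lemma \ref{lem:basic} (via the observation opening the proof of Proposition \ref{prop:bijection}) together with the fact that for a Nakayama algebra every indecomposable module is of the form $Ap/Ap'$; but since $M$ is already known to embed into a projective when it is Gorenstein-projective, the relevant reduction is that any indecomposable $A$-module with $\mathrm{top}\, M = S_j$ is a quotient $P_j/\mathrm{rad}^l P_j$, and one checks directly that $\Omega(M) \simeq Ap$ where $p = p_j^l$ is the nonzero path of length $l$ starting at $j$ (so $s(p) = j = $ the vertex with $\mathrm{top}\, M = S_j$, and $t(p) = [j+l]$ is the vertex with $\mathrm{top}\,\Omega(M) = S_{t(p)}$).

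Next I would apply Proposition \ref{prop:answer}: the module $M$ is non-projective Gorenstein-projective if and only if $L(p) = \{q\}$ for a perfect path $q$, where $p$ is the path with $Ap \simeq \Omega(M)$ just identified. By Lemma \ref{lem:Nak}(1), in the Nakayama case $L(p)$ is always a singleton (or empty, which happens exactly when $Ap$ is projective), say $L(p) = \{q\}$ with $qp = p_{s(p)}^{c_{s(p)}} \in \mathbf{F}$; in particular $(q,p)$ satisfies (P'1), so $s(p) = t(q)$ is black by Lemma \ref{lem:Nak}(3), and $q$ is a nonzero path with $s(q) = t(p)$ and $t(q) = s(p)$ (this last equality because $s(p)=t(q)$). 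Hence $M$ is Gorenstein-projective iff $q$ is a perfect path, and by Proposition \ref{prop:Nak} this holds iff both $s(q) = t(p)$ and $t(q) = s(p)$ are $\theta$-cyclically black. Since $s(p)$ is the vertex with $\mathrm{top}\, \Omega(M) = S_{s(p)}$... wait — I must be careful about which syzygy: $\mathrm{top}\, M = S_{s(p)}$ because $p$ generates $\Omega(M)$ and $s(p)$ is the vertex at which the projective cover $P_{s(p)} \to \Omega(M)$ sits, which is the projective cover of $\mathrm{rad}$-level... Actually $\mathrm{top}\, M = S_{t(p)}$ is cleaner: $M = Ap/(\text{something})$? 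Let me just say: the projective cover of $M$ is $P_{j}$ where $S_j = \mathrm{top}\, M$, its kernel is $Ap$ with $t(p) = j$, so $\mathrm{top}\, M = S_{t(p)}$ and $\mathrm{top}\,\Omega(M) = \mathrm{top}\, Ap = S_{s(p)}$. Thus the condition "$s(p)$ and $t(p)$ both $\theta$-cyclically black" is exactly "$\mathrm{top}\, M$ and $\mathrm{top}\,\Omega(M)$ both $\theta$-cyclically black", and it remains to reconcile this with the perfectness criterion for $q$, noting $\{s(q),t(q)\} = \{s(p),t(p)\}$, so $q$ is perfect iff both $s(p)$ and $t(p)$ are $\theta$-cyclically black — giving exactly the claimed equivalence.

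The main obstacle I anticipate is bookkeeping the identification $\Omega(M) \simeq Ap$ and keeping straight which vertex records $\mathrm{top}\, M$ versus $\mathrm{top}\,\Omega(M)$, together with confirming that in the Nakayama situation $L(p)$ is never empty under the standing hypothesis (i.e. $Ap$ is never projective when $M$ is non-projective Gorenstein-projective, so that $\Omega(M)$ is itself non-projective Gorenstein-projective by Lemma \ref{lem:general}(1)). One should also double-check the degenerate cases $M$ simple (so $\Omega(M) = \mathrm{rad}\, P_j$) and the interplay with Lemma \ref{lem:general}(1) guaranteeing $\Omega(M)$ indecomposable non-projective, so that $\mathrm{top}\,\Omega(M)$ is a well-defined simple module. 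Once these identifications are in place, the equivalence is a direct combination of Proposition \ref{prop:answer}, Lemma \ref{lem:Nak}(3)--(4), and Proposition \ref{prop:Nak}, with no further computation needed.
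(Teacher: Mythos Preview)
There is a genuine gap. Proposition~\ref{prop:answer} characterises when $Ap$ is non-projective Gorenstein-projective; you apply it with $Ap\simeq\Omega(M)$ and then assert that this is equivalent to $M$ being Gorenstein-projective. But ``$\Omega(M)$ Gorenstein-projective $\Rightarrow M$ Gorenstein-projective'' fails here. Take $n=3$ with $(c_1,c_2,c_3)=(4,3,3)$; then only the vertices $2$ and $3$ are $\theta$-cyclically black. Let $M$ have ${\rm top}\,M=S_1$ and length~$2$, so $p=\alpha_2\alpha_1$ (with $s(p)=1$) and $L(p)=\{q\}$ for $q=\alpha_1\alpha_3$, which is perfect. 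Thus your criterion is satisfied and indeed $\Omega(M)\simeq A\alpha_2$ is Gorenstein-projective, yet $M$ is not, since every indecomposable non-projective Gorenstein-projective $A$-module has top $S_2$ or $S_3$.

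What conceals the gap is the claim $\{s(q),t(q)\}=\{s(p),t(p)\}$. In fact $s(q)=t(p)$ but $t(q)=[s(p)+c_{s(p)}]=\theta(s(p))$, so Proposition~\ref{prop:Nak} applied to $q$ yields only that $t(p)$ and $\theta(s(p))$ are $\theta$-cyclically black, a strictly weaker condition. (Relatedly, your claim $qp\in\mathbf F$ fails whenever $s(p)$ is not black, and your final identification of ${\rm top}\,M$ with $S_{t(p)}$ is backwards: since $Ap\subseteq Ae_j=P_j$ one has $s(p)=j$, whence ${\rm top}\,M=S_{s(p)}$ and ${\rm top}\,\Omega(M)={\rm top}\,Ap=S_{t(p)}$.) The paper avoids all of this by applying Proposition~\ref{prop:Nak} to $p$ itself rather than to $q\in L(p)$: once $s(p)$ and $t(p)$ are $\theta$-cyclically black, $p$ is perfect, and choosing a perfect pair $(p,q')$ gives $M\simeq P_{s(p)}/Ap\simeq Aq'$, which is Gorenstein-projective by Proposition~\ref{prop:bijection}(4). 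For the converse one writes $M\simeq Ap'$ with $p'$ perfect via Theorem~\ref{thm:main} and reads off the tops from the perfect pair $(q',p')$.
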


\begin{proof}
For the ``only if" part, we assume by Theorem \ref{thm:main} that $M=Ap$ for a perfect path $p$. We take a perfect pair $(q, p)$ with $q$ a perfect path. Then by (\ref{equ:3}) we have $\Omega(M)\simeq Aq$. We infer that ${\rm top}\;M\simeq S_{t(p)}$ and ${\rm top}\;\Omega(M)\simeq S_{t(q)}$. By Proposition \ref{prop:Nak}, both simple modules are $\theta$-cyclically black.

For the ``if" part, we assume that ${\rm top}\; M\simeq S_i$. Take a projective cover $\pi\colon P_i\rightarrow M$. Recall that each nonzero proper submodule of $P_i$ is of the form $Ap$ for a nonzero nontrivial path $p$ with $s(p)=i$. Take such a path $p$ with $Ap={\rm Ker}\; \pi$, which is isomorphic to $\Omega(M)$.  Therefore, by the assumption both $S_i=S_{s(p)}$ and $S_{t(p)}\simeq {\rm top}\; \Omega(M)$ are $\theta$-cyclically black. Then by Proposition \ref{prop:Nak}, the path $p$ is perfect. Take a perfect pair $(p, q)$ with $q$ a perfect path.  In particular, by (\ref{equ:3}) $Aq$ is isomorphic to $P_i/Ap$, which is further isomorphic to $M$. Then we are done, since by Proposition \ref{prop:bijection}(4) $Aq$ is a Gorenstein-projective module.
\end{proof}

\vskip 20pt

\noindent {\bf Acknowledgements} \;
X.W. Chen and D. Shen are supported by Wu Wen-Tsun Key Laboratory of Mathematics of CAS, the National Natural Science Foundation of China
(No.11201446), NECT-12-0507 and a grant from CAS.  G. Zhou is supported by Shanghai Pujiang Program (No.13PJ1402800),
 the National Natural Science Foundation of China (No.11301186) and the Doctoral Fund of Youth Scholars of Ministry of Education of China (No.20130076120001).

\bibliography{}

\vskip 10pt

 {\footnotesize \noindent Xiao-Wu Chen, School of Mathematical Sciences, University of Science and Technology of
China, Hefei 230026, P.R. China \\
URL: http://home.ustc.edu.cn/$^\sim$xwchen\\

\footnotesize \noindent Dawei Shen, School of Mathematical Sciences, University of Science and Technology of
China, Hefei 230026, P.R. China \\
URL: http://home.ustc.edu.cn/$^\sim$sdw12345\\

\footnotesize \noindent Guodong Zhou, Department of Mathematics, Shanghai Key laboratory of PMMP, East China Normal  University, Shanghai
200241,  P.R. China}

\end{document}